\newtheorem{theorem}{Theorem}[section]  
\newtheorem{proposition}[theorem]{Proposition} 
\newtheorem{conjecture}{Conjecture} 
\newtheorem{corollary}[theorem]{Corollary}
\theoremstyle{definition}
\newtheorem{remark}[theorem]{Remark}
\newtheorem*{remark*}{Remark}
\newcommand{\Z}{\mathbb{Z}}
\newcommand{\R}{\mathbb{R}}
\newcommand{\Q}{\mathbb{Q}}
\newcommand{\strutxx}{
\begin{picture}(18,12)
\qbezier(4,4)(10,12)(16,4) 
\put(2,-2){\scriptsize $x$}
\put(14,-2){\scriptsize $x$}
\end{picture} 
}
\newcommand{\strutxy}{
\begin{picture}(18,12)
\qbezier(4,4)(10,12)(16,4) 
\put(2,-2){\scriptsize $x$}
\put(14,-2){\scriptsize $y$}
\end{picture} 
}
\newcommand{\strutyy}{
\begin{picture}(18,12)
\qbezier(4,4)(10,12)(16,4) 
\put(2,-2){\scriptsize $y$}
\put(14,-2){\scriptsize $y$}
\end{picture} 
}
\newcommand{\Atwoxx}{
\raisebox{-2mm}{
\begin{picture}(14,14)
\put(7,4){\oval(12,8)}
\put(3,8){\line(0,1){4}}
\put(10,8){\line(0,1){4}}
\put(0,14){\scriptsize $x$}
\put(10,14){\scriptsize $x$}
\end{picture}
}
} 
\newcommand{\Athreexx}{
\raisebox{-2mm}{
\begin{picture}(14,14)
\put(7,4){\oval(12,8)}
\put(1,4){\line(1,0){12}}
\put(3,8){\line(0,1){4}}
\put(10,8){\line(0,1){4}}
\put(0,14){\scriptsize $x$}
\put(10,14){\scriptsize $x$}
\end{picture}
}
} 
\newcommand{\Athreeyy}{
\raisebox{-2mm}{
\begin{picture}(14,14)
\put(7,4){\oval(12,8)}
\put(1,4){\line(1,0){12}}
\put(3,8){\line(0,1){4}}
\put(10,8){\line(0,1){4}}
\put(0,14){\scriptsize $y$}
\put(10,14){\scriptsize $y$}
\end{picture}
}
} 
\newcommand{\Atwoyy}{
\raisebox{-2mm}{
\begin{picture}(14,14)
\put(7,4){\oval(12,8)}
\put(3,8){\line(0,1){4}}
\put(10,8){\line(0,1){4}}
\put(0,14){\scriptsize $y$}
\put(10,14){\scriptsize $y$}
\end{picture} } 
}
\newcommand{\Htree}{
\raisebox{-2mm}{
\begin{picture}(26,14)
\put(6,8){\oval(8,12)[r]}
\put(10,8){\line(1,0){6}}
\put(20,8){\oval(8,12)[l]}
\put(0,12){\scriptsize $x$}
\put(0,0){\scriptsize $y$}
\put(22,12){\scriptsize $x$}
\put(22,0){\scriptsize $y$}
\end{picture} }
}
\begin{document}

\title[]{Applications of the Casson-Walker invariant to the knot complement and the cosmetic crossing conjectures}
\author{Tetsuya Ito}


\begin{abstract}
We give a rational surgery formula for the Casson-Walker invariant of a 2-component link in $S^{3}$ which is a generalization of Matveev-Polyak's formula. As application, we give more examples of non-hyperbolic L-space $M$ such that knots in $M$ are determined by their complements. We also apply the result for the cosmetic crossing conjecture.
\end{abstract}

\maketitle

\section{Introduction}

Knots $K$ and $K'$ in an oriented closed 3-manifold $M$ are \emph{equivalent} if there exists an orientation-preserving homeomorphism $f:M\rightarrow M$ such that $f(K)=K'$. When two knots are equivalent, obviously there exists an orientation-preserving homeomorphism $f:M\setminus K \rightarrow M\setminus K'$ between their complements $M\setminus K$ and $M\setminus K'$. In the following, we denote by $M \cong M'$ if two oriented 3-manifolds $M$ and $M'$ are orientation-preservingly homeomorphic.

We say that a knot $K$ is \emph{determined by its complement} if the converse is true; for a knot $K'$ in $M$, if $M \setminus K' \cong M \setminus K$ then $K$ and $K'$ are equivalent.

One can understand a knot determined by its complement in terms of Dehn surgery on $K$. For a slope $r$ of the knot exterior $X(K):= M \setminus N(K)$, where $N(K)$ denotes the tubular neighborhood of $K$, we denote by $M_K(r)$ the $r$-surgery on $K$. We say that two slopes $r$ and $r'$ are \emph{equivalent} if there is an orientation-preserving homeomorphism of $M_K$ that sends $r$ to $r'$. 

Let $K' \subset M_K(r)$ be the core of the attached solid torus $M_K(r)=X(K) \cup (S^{1} \times D^{2})$. Then the complements $M\setminus K$ and $M_K(r) \setminus K'$ are orientation-preservingly homeomorphic. Assume that $M_K(r) \cong M$ so there exists an orientation-preserving homeomorphism $f: M \rightarrow M_K(r)$. Since $f(K)=K'$ if and only if $f$ sends the meridian $\mu_K$ to $r$, a knot $K$ in $M$ is determined by its complement if and only if $M_K(r) \cong M$ implies $r$ is equivalent to the meridian.

A famous Gordon-Luecke theorem \cite{gl} states that knots in $S^{3}$ are determined by their complements. It is conjectured that the same conclusion holds for knots in general $3$-manifolds \cite[Problem 1.81D]{ki}. 

\begin{conjecture} (The oriented knot complement conjecture) 
Knots in a 3-manifold $M$ are determined by their complements; 
For a knot $K$ in a 3-manifold $M$, if $M_K(s) \cong M$, then the surgery slope $s$ is equivalent to the meridian $\mu_K$ of $K$.
\end{conjecture}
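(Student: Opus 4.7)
The conjecture in full generality is wide open, so any realistic proposal is really a plan for attacking it on restricted classes of $M$; the Casson--Walker machinery advertised in the abstract is the natural engine. My plan is to reformulate the problem as follows. If $M_K(s) \cong M$ and we let $K^{*}$ denote the core of the surgery solid torus in $M_K(s)$, then the pair $K \cup K^{*}$ lives as a two-component link whose various Dehn fillings recover $M$ in two different ways (trivial filling on one component, a dual filling on the other). This is exactly the setting in which a rational surgery formula for a two-component link is useful, so I would start by packaging the hypothesized homeomorphism as a numerical identity between the Casson--Walker invariants of the two fillings.

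Next, I would restrict to $M$ a rational homology sphere and run the following chain of constraints on $s$. First, the homological invariant $|H_{1}(M_K(s))| = |H_{1}(M)|$ forces $s$, written in a canonical framing, to lie on a curve determined by the order of $[K]$ in $H_{1}(M)$, leaving only finitely many arithmetic candidates once the $\Q/\Z$-valued linking form is matched. Second, I would plug each candidate $s$ into the promised generalization of the Matveev--Polyak formula; this should yield a rational-function identity in the slope whose only ``obvious'' root is the meridian, with other roots governed by low-order derivatives of the Alexander polynomial of $K$ at $1$ and the self-linking data of $K^{*}$. Candidates for which these auxiliary invariants misalign are immediately eliminated.

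Third, for the genuinely ambiguous candidates that survive the $\lambda$-obstruction, I would import the hypothesis that $M$ is a non-hyperbolic L-space. The L-space condition, via Heegaard Floer and the $d$-invariants, sharply restricts which slopes can produce an L-space back, and non-hyperbolicity lets one enumerate the orientation-preserving self-homeomorphisms of $M$ through a JSJ analysis. Combining these two restrictions with the Casson--Walker identity should force $s$ to be equivalent to $\mu_K$.

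The main obstacle, as I see it, is the third step rather than the formula itself. The Casson--Walker equation generically vanishes along an entire curve of slopes, so the invariant by itself cannot distinguish the meridian; the heavy lifting is in pairing it with an external geometric/Floer-theoretic constraint that is tight enough to pin down the slope uniquely. Getting the two obstructions to be simultaneously sharp, for a class of $M$ broad enough to produce genuinely new examples, is where I expect almost all the difficulty to lie.
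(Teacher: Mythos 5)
The statement you were asked about is a conjecture (Kirby's Problem 1.81D): the paper does not prove it, and you correctly treat it as open, offering a program rather than a proof. So the right comparison is between your program and the paper's actual partial results. Your chain of constraints --- first match $|H_1|$ and the linking form, then feed surviving slopes into a rational surgery formula generalizing Matveev--Polyak, then eliminate the rest with L-space/Floer input and non-hyperbolic classification --- is essentially the strategy the paper carries out in Theorems \ref{theorem:main} and \ref{theorem:main-finite}. Two substantive differences are worth flagging. First, your packaging of the problem as the two-component link $K\cup K^{*}$ (knot plus dual core) inside $M$ does not directly interface with the surgery formula: the formula actually proved (Theorem \ref{theorem:Q-MP}) is for links in $S^{3}$, so the paper instead assumes $M$ is itself obtained by $\frac{p}{q}$-surgery on a knot $K_y\subset S^{3}$ and works with the link $K_x\cup K_y\subset S^{3}$, where $K_x$ is the knot in $S^{3}$ corresponding to $K$; this hypothesis appears in every application and your plan would need it (or a genuinely general rational-homology-sphere surgery formula via the full Aarhus/LMO machinery) to get off the ground. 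Second, where you propose $d$-invariants and a JSJ enumeration of self-homeomorphisms of $M$, the paper instead controls $\Delta(s,\mu_K)$ and disposes of residual cases by quoting external results: Gainullin's theorem for null-homologous knots in L-spaces, Matignon/Rong for non-hyperbolic knots in small Seifert fibered spaces, the cyclic surgery theorem, Gordon--Luecke on reducible surgeries, Boyer--Zhang's finite filling theorem, and Lackenby--Meyerhoff's bound on exceptional slopes; the Casson--Walker identity (Propositions \ref{proposition:CW}--\ref{prop:key}, a mod-$p_0$ congruence involving a Dedekind sum) then kills the finitely many remaining $(\gcd,\Delta)$ possibilities. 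Your closing observation --- that the Casson--Walker constraint alone cannot single out the meridian and must be paired with geometric or Floer-theoretic input --- is exactly the structural feature of the paper's arguments, and is precisely why the method yields the conjecture only for the restricted families of L-spaces in Theorem \ref{theorem:main-finite} rather than in general.
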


By \cite{ma, ro}, non-hyperbolic knots in an irreducible, small Seifert fibered space are also determined by their complements.

\begin{theorem}\cite[Theorem 1.3]{ma}
\label{theorem:Non-hyperbolic}
Non-hyperbolic knots in an irreducible small Seifert fibered space $M$ are determined by their complements.
\end{theorem}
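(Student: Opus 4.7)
The plan is to decompose $X(K)$ geometrically and reduce to a short list of models. Since $K$ is non-hyperbolic, Thurston--Perelman geometrization gives that $X(K)$ is reducible, Seifert fibered, or toroidal. Reducibility is ruled out first: a reducing sphere in $X(K)$ either persists as an essential sphere in $M$, contradicting irreducibility, or bounds a ball in $M$ meeting $K$, which together with the hypothesis $M_K(r)\cong M$ is easily excluded. In each of the two remaining cases, the goal is to show that any slope $r$ with $M_K(r)\cong M$ must be the meridian $\mu_K$.

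In the Seifert fibered case, the Seifert fibration on $X(K)$ either extends over the filling solid torus, realising $K$ as a regular or exceptional fiber of a Seifert fibration of $M$, or the gluing produces a manifold admitting two inequivalent Seifert fiberings. In the first subcase the essential uniqueness of the Seifert fibration on a small SFS forces the surgery slope $r$ to also be a fiber slope, and a direct computation using the Seifert invariants of $M$ and $M_K(r)$ identifies $\mu_K$ as the only fiber slope producing $M$ again. The second subcase is dealt with by hand on the short classification list (a few lens spaces, prism manifolds, and small exceptional SFS) of manifolds carrying multiple Seifert structures.

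In the toroidal case, let $T\subset X(K)$ be an essential torus. Since a small SFS contains no essential torus, $T$ compresses in $M$ and so bounds a solid torus $V\subset M$ with $K\subset \mathrm{int}(V)$, exhibiting $K$ as a non-trivial satellite with pattern $(V,K)$. A surgery slope $r$ satisfying $M_K(r)\cong M$ must then reproduce the decomposition $M=V\cup(M\setminus V)$ on the nose. Invoking classical constraints on Dehn surgery along satellite and cable knots (Schubert, Gordon, Miyazaki--Motegi) together with the severely restricted mapping class group of a small SFS, one concludes that the only slope consistent with a self-homeomorphism of $M$ preserving the JSJ graph is the meridian.

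The main obstacle is this last step of the toroidal case: one must rule out that a non-meridional surgery inside $V$ produces a solid torus filling fiber-preservingly equivalent to $V$ relative to the Seifert structure $M\setminus V$ inherits from $M$. This is where the small SFS hypothesis enters essentially, since it both forbids essential tori in $M$ and cuts the mapping class group down enough that only a finite, explicit list of patterns $K\subset V$ admitting cosmetic fillings remains, and each can be checked directly.
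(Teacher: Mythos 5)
This statement is not proved in the paper at all: it is quoted from Matignon \cite{ma} (with Rong's work \cite{ro} in the background), so there is no internal proof to compare with, and your outline has to be judged against what a complete argument actually requires. Measured that way, it has concrete gaps. First, your stated goal --- that any slope $r$ with $M_K(r)\cong M$ must \emph{be} the meridian --- is stronger than the theorem and is false in several of the very cases you must handle; ``determined by the complement'' only requires $r$ to be \emph{equivalent} to $\mu_K$ under an orientation-preserving self-homeomorphism of the exterior. Relatedly, the reducible case is not ``easily excluded'': if $X(K)$ is reducible, the reducing sphere bounds a ball in the irreducible $M$ and that ball must contain $K$, and every knot contained in a ball of $M\neq S^{3}$ is non-hyperbolic, so this is a genuine case. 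There one has $M_K(r)\cong M\#\,S^{3}_K(r)$, uniqueness of prime decomposition forces $S^{3}_K(r)\cong S^{3}$, and one concludes via Gordon--Luecke \cite{gl} when $K$ is knotted in the ball; when $K$ is an unknot in a ball, every slope $1/n$ returns $M$, and the theorem survives only because all these slopes are equivalent to $\mu_K$ by twisting the solid-torus part of the exterior. The same equals-versus-equivalent issue reappears in your Seifert-fibered case (e.g.\ cores of Heegaard solid tori in lens spaces admit many non-meridional fillings giving back the same lens space), where your phrase ``forces the surgery slope to also be a fiber slope'' also misstates the mechanism: what one actually uses is that the fibration of $X(K)$ extends over the filling torus for every slope except the fiber slope, and then compares Seifert invariants, treating the fiber-slope filling and the manifolds with non-unique Seifert structures separately.

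Second, the toroidal case --- which is where essentially all of the work in \cite{ma} lies --- contains an unjustified step and then defers the crux. An essential torus $T\subset X(K)$ that compresses in $M$ need not bound a solid torus containing $K$: the standard alternative is that $T$ lies in a ball (bounding a knotted-hole ball on one side), and this case must be identified and folded back into the knot-in-a-ball analysis rather than ignored. For the genuine satellite case, saying that ``classical constraints on Dehn surgery along satellite and cable knots together with the restricted mapping class group'' finish the argument is precisely the part that cannot be waved through: ruling out non-meridional cosmetic fillings for cables and satellites of fibers in small Seifert fibered spaces, and for Seifert-fibered exteriors in general, is the detailed case analysis carried out by Matignon and Rong, and you acknowledge yourself that this ``main obstacle'' is not resolved in your sketch. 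As it stands, the proposal is a reasonable top-level strategy (the reducible/Seifert/toroidal trichotomy is the right starting point), but the decisive steps are either missing or incorrectly dismissed.
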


For null-homologous knots, using Heegaard Floer homology Gainullin showed the following.

\begin{theorem}\cite[Theorem 8.2]{ga}
\label{theorem:L-space}
Null-homologous knots in an L-space $M$ are determined by their complements.
\end{theorem}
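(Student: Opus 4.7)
The plan is to combine a homology count with the Heegaard Floer surgery formula. Since $K$ is null-homologous in $M$, the meridian $\mu_K$ together with a Seifert longitude give a canonical basis of $H_1(\partial N(K);\Z)$, and slopes on $\partial N(K)$ are parametrized by $p/q\in\Q\cup\{\infty\}$ with $\mu_K\leftrightarrow \infty$. A standard Mayer--Vietoris computation gives $H_1(M_K(p/q);\Z)\cong H_1(M;\Z)\oplus \Z/p\Z$ when $p\ne 0$ and produces a $\Z$-summand when $p=0$. Because $M$ is an L-space, $H_1(M;\Z)$ is finite, so the hypothesis $M_K(s)\cong M$ forces $s=\pm 1/q$ for some integer $q\ge 0$. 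The case $q=0$ is exactly the meridian, so the problem reduces to ruling out $s=\pm 1/q$ with $q\ge 1$ for every nontrivial null-homologous $K$.

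For this reduction I would invoke the Ozsv\'ath--Szab\'o mapping cone formula, which expresses $\widehat{HF}(M_K(p/q))$ as the homology of an explicit complex built from the knot Floer complex $CFK^{\infty}(M,K)$. Applied to $s=\pm 1/q$, together with the assumption $M_K(s)\cong M$ and the L-space identity $|\widehat{HF}(M)|=|H_1(M;\Z)|$, this forces $M_K(\pm 1/q)$ to itself be an L-space, which in turn pins down $K$ as an L-space knot in $M$. Once this is in hand, the knot Floer complex $CFK^{\infty}(M,K)$ has the rigid staircase form controlled by the Alexander polynomial and the Seifert genus $g(K)$.

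The last step is to compare Heegaard Floer correction terms. Extending Ni--Wu's formula for surgery $d$-invariants from $S^{3}$ to an arbitrary L-space base, I would write the full list of $d$-invariants of $M_K(\pm 1/q)$ in terms of the $d$-invariants of $M$ and the non-negative integers $V_i(K)$ extracted from the staircase; matching this list with the $d$-invariants of $M$ itself would force every $V_i(K)$ to vanish, hence $g(K)=0$, so $K$ bounds a disk and $s$ is equivalent to $\mu_K$. I expect the main obstacle to lie precisely here: to adapt Ni--Wu's $d$-invariant surgery formula to a general L-space base one must track the $\mathrm{spin}^{c}$-labelling along the identification $M_K(\pm 1/q)\cong M$ carefully enough to rule out accidental cancellations among contributions of different $V_i$'s when $q$ is small, and one must check that the bijection of $\mathrm{spin}^{c}$-structures induced by the homeomorphism is compatible with the structure coming from the mapping cone.
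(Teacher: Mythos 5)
This theorem is not proved in the paper at all; it is quoted from Gainullin \cite{ga}, so your sketch has to be measured against his argument. Your opening step is fine and matches his: for a null-homologous knot the Seifert longitude dies in $H_1$ of the exterior, so $|H_1(M_K(p/q);\Z)|=|p|\cdot|H_1(M;\Z)|$, and finiteness of $H_1(M;\Z)$ (since $M$ is an L-space) reduces the problem to ruling out the slopes $\pm 1/q$, $q\ge 1$, or rather to showing they can only return $M$ when they are equivalent to the meridian.

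The genuine gap is in your endgame. First, from the single hypothesis that $M_K(\pm 1/q)\cong M$ is an L-space you conclude that $K$ is an L-space knot in $M$ with a staircase $CFK^{\infty}$; that inference is not available. Even in $S^3$, ``some positive L-space slope implies L-space knot'' is itself proved via the mapping cone and the interval structure of L-space slopes, and over an arbitrary L-space base the relevant structure theory (Floer simple knots) concerns large surgeries, not one small slope $1/q$. Second, and more seriously, the implication ``all $V_i(K)=0$, hence $g(K)=0$'' is false: the figure-eight knot in $S^3$ has $V_i\equiv 0$ and $H_i\equiv 0$ but genus one, so no amount of matching of $d$-invariants can force the genus to vanish. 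The step that actually closes Gainullin's argument is a rank comparison, not a $d$-invariant comparison: the homeomorphism $M_K(1/q)\cong M$ forces $\operatorname{rank}\widehat{HF}(M_K(1/q))=|H_1(M;\Z)|$, the mapping-cone rank inequality then forces $\operatorname{rank}\widehat{HFK}(M,K)=\operatorname{rank}\widehat{HF}(M)$, and Ni's Thurston-norm detection for knot Floer homology shows that $K$ bounds a disk in $M$. Finally, even after knowing $K$ bounds a disk you must still argue that every $1/q$ slope is equivalent to the meridian; this follows by twisting the exterior (a solid torus connect-summed with $M$) along that disk, a step you assert but do not supply. So the overall strategy (homology reduction plus surgery formula) is of the same family as the cited proof, but as written the key genus step rests on an implication that is simply not true.
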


As was implicit in the argument of \cite[Corollary 8.3]{ga}, for a non-null-homologous knots, the order of the first homology gives the following constraint.

\begin{theorem}
\label{theorem:1sthomology}
Let $K$ be a non-null-homologous knot in a rational homology sphere $M$.
Assume that $M$ is obtained by a Dehn surgery on a knot in $S^{3}$.
If $|H_1(M;\Z)|$ is square-free, for every slope $s$ with $\Delta(s,\mu_K)=1$, $M \not \cong M_K(s)$.
\end{theorem}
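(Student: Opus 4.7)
The plan is to present $M_K(s)$ explicitly as a surgery on a two-component link in $S^{3}$ and read off $|H_{1}(M_K(s);\Z)|$ from the associated linking matrix; the conclusion then follows from an elementary divisibility argument that uses square-freeness of $|H_1(M;\Z)|$.

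Write $M = S^{3}_{J}(a/b)$ with $\gcd(a,b)=1$, so that $|H_{1}(M;\Z)| = |a|$. The first step is to realise $K$ as a knot in $S^{3}\setminus J$: by general position, $K$ may be isotoped off the dual core $J^{*}\subset M$, and $M\setminus N(J^{*})$ is canonically identified with $S^{3}\setminus N(J)$. Set $\ell = \mathrm{lk}(J,K) \in \Z$. Under the quotient $H_{1}(S^{3}\setminus N(J);\Z) = \Z \twoheadrightarrow H_{1}(M;\Z) = \Z/a\Z$, the class $[K]$ maps to $\ell \bmod a$, so the hypothesis that $K$ is non-null-homologous forces $\ell \not\equiv 0 \pmod{a}$.

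Next I fix the Seifert longitude $\lambda_{K}$ of $K \subset S^{3}$ as a reference framing of $\partial N(K)$. A slope $s$ with $\Delta(s,\mu_{K})=1$ takes the form $s = p\mu_{K} + \lambda_{K}$ for some $p \in \Z$ (up to reversing orientation), and $M_K(s)$ is then the rational surgery $S^{3}_{J \cup K}(a/b,\, p)$. The standard presentation of the first homology coming from the linking matrix of this surgery gives
\[
|H_{1}(M_K(s);\Z)| \;=\; \left|\det\!\begin{pmatrix} a & b\ell \\ \ell & p \end{pmatrix}\right| \;=\; |ap - b\ell^{2}|.
\]

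Finally, if $M_K(s) \cong M$, then $|ap - b\ell^{2}| = |a|$, and hence $a \mid b\ell^{2}$. Since $\gcd(a,b)=1$ we get $a \mid \ell^{2}$, and because $a$ is square-free this strengthens to $a \mid \ell$, contradicting $\ell \not\equiv 0 \pmod a$. The only real technical ingredient is identifying $M_K(s)$ with $S^{3}_{J\cup K}(a/b,\, p)$ in the correct framing; once $\lambda_K$ is fixed this is routine, so no substantial obstacle is anticipated.
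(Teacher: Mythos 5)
Your argument is correct and is essentially the paper's own proof: present $M_K(s)$ as surgery on the two-component link $J\cup K$ in $S^{3}$, equate $|H_1| = |ap - b\ell^{2}|$ with $|a|$, and use $\gcd(a,b)=1$ together with square-freeness to force $a\mid \ell$, contradicting non-null-homology (the paper packages the divisibility step in Proposition \ref{proposition:1sthomology}, but the content is identical).
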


Here $\Delta(s,\mu_K)$ denotes the minimum geometric intersection of two slopes $s$ and $\mu_K$. This observation, the cyclic surgery theorem \cite{cgls} and Theorem \ref{theorem:Non-hyperbolic} shows that non-null-homologous knots in $L(p,q)$ are also determined by their complements whenever $p$ is square-free. Consequently,

\begin{theorem}\cite[Corollary 8.3]{ga}
\label{theorem:lens}
If $p$ is square-free then all knots in $L(p,q)$ are determined by their complements. 
\end{theorem}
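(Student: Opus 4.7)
The plan is to take a knot $K \subset L(p,q)$ with $M_K(s) \cong L(p,q)$ and show $s$ is equivalent to $\mu_K$ by splitting into cases according to whether $K$ is null-homologous and whether $K$ is hyperbolic. The key observation is that $L(p,q)$ with $p$ square-free simultaneously satisfies the hypotheses of all three preceding theorems: it is an irreducible small Seifert fibered space, it is an L-space, and it is a rational homology sphere with square-free $|H_1| = p$ obtained as $-p/q$-surgery on the unknot in $S^{3}$.

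First, if $K$ is null-homologous in $L(p,q)$, Theorem \ref{theorem:L-space} applies directly. Next, if $K$ is non-null-homologous but non-hyperbolic, Theorem \ref{theorem:Non-hyperbolic} applies. This leaves the case in which $K$ is hyperbolic and non-null-homologous.

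In that remaining case, both $M_K(\mu_K) \cong L(p,q)$ and $M_K(s) \cong L(p,q)$ have cyclic fundamental group. Since $K$ is hyperbolic, the exterior $X(K)$ is irreducible, atoroidal, and has incompressible boundary, so the cyclic surgery theorem of Culler--Gordon--Luecke--Shalen \cite{cgls} forces $\Delta(s, \mu_K) \leq 1$. The value $\Delta(s, \mu_K) = 1$ is ruled out by Theorem \ref{theorem:1sthomology} (applied with $M = L(p,q)$, using that $p$ is square-free and $L(p,q)$ arises from surgery on the unknot in $S^{3}$), so $\Delta(s, \mu_K) = 0$ and hence $s = \mu_K$.

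The main obstacle is recognizing that the three input theorems exactly cover complementary regimes, with the cyclic surgery theorem bridging the final gap: Theorem \ref{theorem:L-space} handles the null-homologous regime, Theorem \ref{theorem:Non-hyperbolic} handles the non-hyperbolic regime, and in the hyperbolic non-null-homologous regime the CGLS bound $\Delta \leq 1$ combines with the distance-one obstruction of Theorem \ref{theorem:1sthomology} to force $\Delta = 0$. The argument is essentially a bookkeeping of cases; no new computation is required beyond verifying that $L(p,q)$ meets the hypotheses listed above.
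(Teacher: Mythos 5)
Your proposal is correct and follows essentially the same route as the paper: null-homologous knots are handled by Theorem \ref{theorem:L-space}, non-hyperbolic knots by Theorem \ref{theorem:Non-hyperbolic}, and for hyperbolic non-null-homologous knots the cyclic surgery theorem \cite{cgls} bounds $\Delta(s,\mu_K)\leq 1$ while Theorem \ref{theorem:1sthomology} (with $L(p,q)$ realized as surgery on the unknot and $p$ square-free) excludes $\Delta(s,\mu_K)=1$, forcing $s=\mu_K$. This is exactly the argument sketched in the introduction preceding Theorem \ref{theorem:lens}.
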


Ichihara and Saito proved the same result for $L(4,q)$ \cite{is}, by a closer look at the 1st homology group, namely, by checking whether the 1st homology group is cyclic or not. 

The aim of this paper is to extend Theorem \ref{theorem:1sthomology} by using the Casson-Walker invariant $\lambda_{w}$ to generalize Theorem \ref{theorem:lens}.

Recall that for a rational homology sphere, the order of the 1st homology can be seen as a finite type invariant of degree zero, and the Casson-Walker invariant is the finite type invariant of degree one.
Thus our argument can be seen as the simplest generalization of the 1st homology argument in a point of view of the finite type invariants. It is interesting to investigate more constraint coming from higher order (degree $2,3,\ldots$) finite type invariants, as the author did in \cite{ito1} for surgery on knots in $S^{3}$.

To this purpose, in Section \ref{section:Q} we give a \emph{rational} surgery formula of the Casson-Walker invariant of a 2-component link in $S^{3}$. This is a generalization for Matveev-Polyak's formula \cite{mp} of \emph{integral} surgery on a 2-component link in $S^{3}$.

\begin{theorem}[Rational surgery formula of the Casson-Walker invariant  for 2-component links in $S^{3}$]
\label{theorem:Q-MP}
Let $L=K_x \cup K_y$ be a rationally framed 2-component link, where the framings of the components $K_x$ and $K_y$ are $f_x=p_x \slash q_x$ and $f_y=p_y \slash q_y$, respectively. Let $S^{3}_L$ be the 3-manifold obtained by Dehn surgery on $L$.

If $S^{3}_L$ is a rational homology 3-sphere, its Casson-Walker invariant $\lambda_w(S^{3}_L)$ is given by 
\begin{align*}
D\left(\frac{\lambda_w(S^{3}_L)}{2}-\frac{\varsigma}{8}\right) &= a_2(K_x)\frac{p_y}{q_y}-\frac{p_y}{24q_y} -\frac{p_y}{24q_yq_x^2} +\frac{p_y\ell^2}{24q_y}\\
&+a_2(K_y)\frac{p_x}{q_x} -\frac{p_x}{24q_x}-\frac{p_x}{24q_xq_y^2}+\frac{p_x\ell^2}{24q_x}\\
&+ 2v_3(L) + \frac{D}{24}\left( S\left(\frac{p_x}{q_x}\right)-\frac{p_x}{q_x} +S\left(\frac{p_y}{q_y}\right)-\frac{p_y}{q_y} \right).
\end{align*}
Here 
\begin{itemize}
\item $a_{i}(K)$ is the coefficient of $z^{i}$ in the Conway polynomial $\nabla_{K}(z)$ of $K$.
\item $\ell=lk(K_x,K_y)(=a_1(L))$ is the linking number.
\item $D= \det A$ and $\varsigma =\sigma(A)$ be the determinant and the signature of the linking matrix $A=\begin{pmatrix} f_x & \ell \\ \ell & f_y \end{pmatrix}$.
\item $v_3(L)$ is the coefficient of $\Htree$ in the Kontsevich invariant of $L$. This is explicitly written by
\[ v_3(L)=\frac{1}{2}\left( -a_3(L)+(a_2(K_x)+a_2(K_y))\ell +\frac{1}{12}(\ell^3-\ell) \right) \]
\item $S(\frac{p}{q})$ is the Dedekind symbol (see \cite{km} for definition and backgrounds) related to the more famous Dedekind sum $s(p,q)$ by 
\[ S\left(\frac{p}{q}\right)=  12(sign(q))s(p,q). \]
The Dedekind sum $s(p,q)$ is defined by
\[ s(p,q)=\sum_{k=1}^{|q|-1} \left(\!\!\left( \frac{k}{p}\right)\!\!\right)\left(\!\!\left( \frac{kq}{p}\right)\!\!\right) 
\]
where $(\!( x)\!)=x-\lfloor x\rfloor -\frac{1}{2}$.
\end{itemize}
\end{theorem}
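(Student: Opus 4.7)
The plan is to reduce the rational surgery to integer surgery via slam-dunk (continued fraction expansion) and then apply Matveev-Polyak's integer surgery formula \cite{mp}, of which this theorem is a rational generalization. Expand each framing $p_\bullet/q_\bullet$ as a negative continued fraction $[a_1^\bullet, a_2^\bullet, \ldots, a_{k_\bullet}^\bullet]$; then $p_\bullet/q_\bullet$-surgery on $K_\bullet$ is equivalent to integer surgery on a chain $K_\bullet \cup C_1^\bullet \cup \cdots \cup C_{k_\bullet-1}^\bullet$, where the $C_i^\bullet$ are unknotted meridians with integer framings given by the $a_i^\bullet$. The resulting integer-framed link $\tilde L$ yields the same 3-manifold $S^3_L$, so $\lambda_w(S^3_L)$ can be computed by applying Matveev-Polyak to $\tilde L$ and then reorganizing the output into the stated form.

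First I would split the Matveev-Polyak sum on $\tilde L$ into \emph{knotty} contributions, from sublinks meeting $\{K_x, K_y\}$, and \emph{chain} contributions, from sublinks involving only the auxiliary $C_i^\bullet$. Each $C_i^\bullet$ is unknotted, so $a_2(C_i^\bullet) = 0$, and $v_3$ of any sublink missing both $K_x$ and $K_y$ vanishes. Since $C_i^\bullet$ links only its two chain-neighbors, the knotty contributions collapse, after dividing through by the chain determinants (which combine with $D$), to exactly $a_2(K_x) \frac{p_y}{q_y} + a_2(K_y) \frac{p_x}{q_x} + 2 v_3(L) + \frac{p_x \ell^2}{24 q_x} + \frac{p_y \ell^2}{24 q_y}$. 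The signature $\varsigma = \sigma(A)$ on the left is matched to $\sigma(\tilde A)$ via the Hirzebruch-Meyer signature defect of the continued fraction.

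Next I would identify the pure chain terms with Dedekind symbols. The remaining Matveev-Polyak contributions depend only on the integers $a_1^x, \ldots, a_{k_x}^x$ arranged as in the $x$-chain; by the classical continued-fraction representation of $s(p_x, q_x)$, these sums repackage as $\frac{D}{24}\bigl(S(p_x/q_x) - p_x/q_x\bigr)$ together with the lower-order correction $-\frac{p_x}{24 q_x} - \frac{p_x}{24 q_x q_y^2}$, and symmetrically for the $y$-chain.

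The main obstacle is the combinatorial bookkeeping for the chain: one must verify that the continued-fraction sums assemble exactly into the Dedekind symbols, with the correct sign conventions between $S(p/q)$ and $12\, sign(q)\, s(p,q)$, and that the cross term $-\frac{p_x}{24 q_x q_y^2}$ emerges from sublinks combining one chain element with the $K_x$--$K_y$ linking through $\ell$. A possibly cleaner alternative is to verify that both sides satisfy the same transformation under Rolfsen twists and handle slides and agree on a base case (e.g.\ the Hopf link with arbitrary rational framings), which characterizes $\lambda_w$ uniquely.
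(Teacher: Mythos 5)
Your reduction hinges on applying Matveev--Polyak's formula to the integer-framed link $\tilde L$ obtained by slam-dunking the continued fraction expansions, but Matveev--Polyak's result is a formula for \emph{two-component} integrally framed links in $S^{3}$ only --- that restriction is precisely why the rational two-component case needs a new argument. The chain link $\tilde L$ has $k_x+k_y$ components, so the tool you invoke simply does not apply to it; to run your strategy you would need a surgery formula valid for links with arbitrarily many components (e.g.\ Lescop's global surgery formula, or the LMO-based formula that the paper actually uses via Bar-Natan--Lawrence and the degree-one identification $\lambda_w/4=Z^{LMO}_1$). Even granting such a tool, the steps you describe as ``collapse'' and ``repackaging'' --- that the knotty contributions give exactly $a_2(K_x)\frac{p_y}{q_y}+a_2(K_y)\frac{p_x}{q_x}+2v_3(L)+\frac{p_x\ell^2}{24q_x}+\frac{p_y\ell^2}{24q_y}$, that the pure chain terms assemble into $\frac{D}{24}\left(S\left(\frac{p_x}{q_x}\right)-\frac{p_x}{q_x}\right)$ plus the corrections $-\frac{p_x}{24q_x}-\frac{p_x}{24q_xq_y^2}$, and that the signatures match through the continued-fraction defect --- constitute the entire content of the theorem and are asserted rather than verified, so as written the proposal contains no proof of the identity.

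The fallback argument is also not sound as stated. Invariance of the right-hand side under Rolfsen twists and handle slides, together with agreement on rationally framed Hopf links, does not characterize $\lambda_w$ on all two-component presentations: two 2-component rational surgery presentations of the same manifold are in general related only through sequences of moves (blow-ups/downs, slides, component insertions) that leave the class of 2-component links and change the underlying link type, so a base case plus move-invariance within that class does not determine the formula for an arbitrary $L$. By contrast, the paper avoids both issues by computing the degree-one part of the LMO invariant directly from the Bar-Natan--Lawrence rational surgery formula, where the Dedekind symbol terms and the wheel corrections $-\frac{p_\bullet}{24q_\bullet q_\ast^2}$ come out of the formal Gaussian integration with the elements $\Omega_{x/q_x}$, $\Omega_{y/q_y}$ rather than from continued-fraction bookkeeping.
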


The Casson-Walker invariant allows us to extend Theorem \ref{theorem:1sthomology} for the case the order of the 1st homology contains square factor $2^{2},3^2,6^2$.

\begin{theorem}
\label{theorem:main}
Let $M$ be a rational homology sphere which is obtained by the $\frac{p}{q}$-surgery on a knot in $S^{3}$ ($\frac{p}{q}>0$) and let $K$ be a non-null-homologous knot in $M$.
Assume that $p=d^2p'$ such that $p'$ is square-free, $d\in \{1,2,3,6\}$ and that $\gcd(d,p')=1$ when $d>1$.
Then for every slope $s$ with $\Delta(s,\mu_K)=1$, $M_K(s) \not \cong M$.
\end{theorem}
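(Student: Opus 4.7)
The plan is to combine Gainullin's first-homology obstruction (as used in the proof of Theorem \ref{theorem:1sthomology}) with the finer Casson-Walker constraint supplied by Theorem \ref{theorem:Q-MP}. I first realize $K$ as a knot coming from $S^{3}$: since by hypothesis $M = S^{3}_{K_0}(p/q)$ for some knot $K_0 \subset S^{3}$, I may assume $K$ is the image of a knot $K_1 \subset S^{3}$ disjoint from $K_0$. Set $L = K_0 \cup K_1$ and $\ell = lk(K_0, K_1)$. Then $[K] = \ell$ in $H_1(M) \cong \Z/p\Z$, and the non-null-homology hypothesis becomes $\ell \not\equiv 0 \pmod{p}$. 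The meridian $\mu_K$ in $M$ agrees with $\mu_{K_1}$ in $S^{3}$, so any slope $s$ with $\Delta(s, \mu_K) = 1$ is an integer framing $k$ on $K_1$, and $M_K(s) = S^{3}_L(p/q, k)$.

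Suppose, for contradiction, that $M_K(s) \cong M$. The linking matrix gives $|H_1(M_K(s))| = |pk - q\ell^2|$, which must equal $|H_1(M)| = p$, so $pk - q\ell^2 = \pm p$ and in particular $p \mid q\ell^2$. Since $\gcd(p, q) = 1$ we obtain $p \mid \ell^2$; writing $p = d^2 p'$ with $p'$ square-free and coprime to $d$, this forces $dp' \mid \ell$, hence $\ell = dp' m$ for some integer $m$. Because $\ell \not\equiv 0 \pmod p$, necessarily $m \not\equiv 0 \pmod{d}$, and in particular $m \neq 0$. The case $d = 1$ already yields $p \mid \ell$, a direct contradiction; so from now on assume $d \in \{2, 3, 6\}$.

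Next I compare Casson-Walker invariants. Applying Theorem \ref{theorem:Q-MP} to $L$ with framings $(p/q, k/1)$ and using $D = p$, $S(k/1) = 0$ gives an explicit expression for $\lambda_w(M_K(s))$. Walker's one-component rational surgery formula expresses $\lambda_w(M) = \lambda_w(S^{3}_{K_0}(p/q))$ in closed form involving only $a_2(K_0), p, q,$ and $S(p/q)$. Setting $\lambda_w(M_K(s)) = \lambda_w(M)$, the common $S(p/q)$ contribution cancels, leaving an identity in $\Q$ whose terms have denominators dividing $24pq$. Substituting $\ell = dp' m$ and $k = \pm 1 + qp' m^2$ and clearing denominators yields an integer polynomial identity in $m$ whose coefficients depend on $d, p', q$ and on the integer invariants $a_2(K_0), a_2(K_1), 2 v_3(L)$.

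The main obstacle is the arithmetic of this final identity: I must rule out every nonzero $m$ with $m \not\equiv 0 \pmod d$ for $d \in \{2, 3, 6\}$. The plan is to reduce the identity modulo a suitable divisor of $24$ to exploit the cancellations, and then use $\gcd(d, p') = 1$. The restriction $d \in \{1, 2, 3, 6\}$ is essentially forced by the denominator $24 = 2^3 \cdot 3$ appearing in Theorem \ref{theorem:Q-MP}: a usable congruence obstruction on $m$ arises only when the prime factors of $d$ lie in $\{2, 3\}$. For such $d$ the resulting congruence contradicts $m \not\equiv 0 \pmod d$, completing the proof.
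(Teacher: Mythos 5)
Your setup coincides with the paper's (Section 3): realize $K$ as $K_x\subset S^3$ alongside the surgery knot $K_y$, note that $\Delta(s,\mu_K)=1$ makes the surgery on $K_x$ an integer framing $k$, extract $pk-q\ell^2=\varepsilon p$ and $\mathrm{rad}(p)=dp'\mid\ell$ from $|H_1|$, dispose of $d=1$, and then compare Casson--Walker invariants via Theorem \ref{theorem:Q-MP}. This is exactly the paper's route (Propositions \ref{proposition:1sthomology}, \ref{proposition:CW}, \ref{prop:key}). However, the decisive step is missing: you assert that after clearing denominators and reducing "modulo a suitable divisor of $24$" one gets a congruence that "contradicts $m\not\equiv 0\pmod d$," but you never derive that congruence, and the mechanism you describe is not the one that actually works. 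After reduction, every term with a factor $24$, $p$, or $\ell^2$ dies, and what survives is $\frac{\ell^2(q^2+1)}{p}=p'm^2(q^2+1)\equiv 0$; the contradiction then comes from properties of $q$, not directly from $m$. For $d=3$ it is the impossibility of $q^2\equiv-1\pmod 3$; for $d=2$ the mod-$2$ reduction is vacuous (since $p$ is even forces $q$ odd, $q^2+1$ is automatically even), and one must reduce mod $4$ and use $q^2+1\equiv 2\pmod 4$ with $p',m$ odd; for $d=6$ one needs a case split (if $3\nmid m$ use the mod-$3$ argument, otherwise $m$ is odd and one falls back to the mod-$4$ argument). None of this is in your plan, and it is precisely the content of Proposition \ref{prop:key}, so as written the proof has a genuine gap at its core.

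Two further slips in the sketch would surface when you try to execute it. First, $D=\det A=\frac{pk-q\ell^2}{q}=\frac{\varepsilon p}{q}$, not $p$; the sign $\varepsilon$ and the denominator $q$ matter when you clear denominators. Second, the Dedekind contributions do not "cancel" directly: Walker's formula for $\lambda_w(M)$ involves $S(q/p)$ while Theorem \ref{theorem:Q-MP} produces $S(p/q)$, so you need the reciprocity law $S(p/q)-\frac{p}{q}=-S(q/p)+\frac{q}{p}+\frac{1}{pq}-3$, whose correction terms are exactly the source of the $(q^2+1)$ factor above. Also note $2v_3(L)$ is only a half-integer in general (it fails to be integral when $\ell\equiv 2\pmod 4$), so calling it an integer invariant and reducing mod $4$ requires the extra observation that $24\cdot 2v_3(L)\equiv 2(\ell^3-\ell)\equiv 0\pmod 4$ once $\ell\equiv 0\pmod d$. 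These are repairable, but together with the missing congruence analysis they mean the proposal is a correct outline of the paper's strategy rather than a proof.
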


Actually, the Casson-Walker invariant provides more general constraints that can be applied for slope $s$ with $\Delta(s,\mu_K)>1$. As an application, we extend Theorem \ref{theorem:lens} for several non-hyperbolic L-spaces.

\begin{theorem}
\label{theorem:main-finite}
Let $M$ be a 3-manifold obtained from a Dehn surgery on a knot in $S^{3}$.
Assume that $M$ satisfies one of the followings.
\begin{enumerate}
\item[(i)] $M$ is a reducible L-space, such that $|H_1(M;\Z)|=d^2p'$ where $p'$ is square-free, $d\in \{1,2,3,6\}$. Moreover, when $d>1$, $\gcd(d,p')=1$.
\item[(ii)] $M$ is a lens space such that $|H_1(M;\Z)|=d^2p'$ where $p'$ is square-free, $d\in \{1,2,3,6\}$. Moreover, when $d>1$, $\gcd(d,p')=1$.
\item[(iii)] $\pi_1(M)$ is finite and that $|H_1(M;\Z)|$ is square-free.
\item[(iv)]  $M$ is a small Seifert fibered L-space such that $|H_1(M;\Z)|$ is square-free, coprime to both $5$ and $7$, and that $6 \nmid |H_1(M;\Z)|$.
\end{enumerate}
Then all knots in $M$ are determined by their complements. 
\end{theorem}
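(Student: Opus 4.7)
The plan is to split the argument according to whether the knot $K$ is null-homologous. If $K$ is null-homologous, then in each of the four cases $M$ is an L-space: (i), (ii) and (iv) state this directly, while in (iii) finiteness of $\pi_{1}(M)$ forces $M$ to be elliptic, hence an L-space. Theorem \ref{theorem:L-space} then gives $s=\mu_{K}$ immediately.

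Assume henceforth that $K$ is non-null-homologous and that $M_{K}(s)\cong M$. I first restrict the exterior $X(K)$ and the distance $\Delta(s,\mu_{K})$ by classical surgery theorems. If $X(K)$ is non-hyperbolic and $M$ is irreducible and small Seifert fibered, as in (ii), (iii), (iv), then Theorem \ref{theorem:Non-hyperbolic} settles the matter. When $X(K)$ is hyperbolic and $M$ has cyclic $\pi_{1}$, as in case (ii) and the cyclic sub-case of (iii), the cyclic surgery theorem \cite{cgls} forces $\Delta(s,\mu_{K})\leq 1$; when $\pi_{1}(M)$ is finite but non-cyclic the finite surgery theorem of Boyer--Zhang bounds $\Delta(s,\mu_{K})$ by a small absolute constant, and in case (iv) the corresponding Seifert-filling bound does the same. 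Case (i), where $M$ is reducible, is handled separately via the reducible-surgery theorems (Gordon--Luecke, Gabai and Scharlemann-style cabling arguments) that control when a Dehn filling on an irreducible exterior yields a reducible manifold.

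For the remaining slopes with $\Delta(s,\mu_{K})=1$, Theorem \ref{theorem:main} applies directly because in each of (i)--(iv) the order $|H_{1}(M;\Z)|$ has the prescribed form $d^{2}p'$ with $d\in\{1,2,3,6\}$ and $\gcd(d,p')=1$ (the square-free cases being $d=1$). For the slopes with $\Delta(s,\mu_{K})\geq 2$ that survive in cases (iii) and (iv), I exploit the hypothesis that $M$ arises from surgery on a knot $J\subset S^{3}$. Writing $M=S^{3}_{J}(p/q)$, the knot $K\subset M$ is represented by a preimage $\widetilde{K}\subset S^{3}$, so $M_{K}(s)$ is realized as a Dehn surgery on the two-component link $L=J\cup\widetilde{K}\subset S^{3}$ with rational framings determined explicitly by $(p/q,s)$. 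Theorem \ref{theorem:Q-MP} then expresses $\lambda_{w}(M_{K}(s))$ in closed form, and the equation $\lambda_{w}(M_{K}(s))=\lambda_{w}(M)$ becomes a Diophantine condition on the framing data involving Dedekind symbols and the linking number $\ell=\mathrm{lk}(J,\widetilde{K})$.

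The main obstacle is to show that this Diophantine equation admits no solution corresponding to a non-meridional slope. This is where the arithmetic assumptions in (iii) and (iv) — square-freeness of $|H_{1}(M;\Z)|$, coprimality with $5$ and $7$, and indivisibility by $6$ — play their role: after clearing denominators in the Casson--Walker equation by multiplying through by $D=|H_{1}(M;\Z)|$ and reducing modulo a suitable prime factor of $D$, the Dedekind-sum contributions are tamed. Coprimality with $5$ and $7$ specifically prevents the reciprocity-type identities for $S(p_{x}/q_{x})$ and $S(p_{y}/q_{y})$ from producing coincidental cancellations, while $6\nmid|H_{1}|$ eliminates residual contributions from small primes that would otherwise permit exotic slopes. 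This residue analysis, rather than any of the preceding topological reductions, is the real heart of the argument.
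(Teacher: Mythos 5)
Your reduction steps match the paper's: Gainullin's theorem for null-homologous knots (Theorem \ref{theorem:L-space}), Matignon's theorem for non-hyperbolic knots in the small Seifert fibered cases, the cyclic surgery theorem for lens spaces, Boyer--Zhang and Lackenby--Meyerhoff to bound $n=\Delta(s,\mu_K)$, the Gordon--Luecke reducible surgery theorem to get $n=1$ in case (i), and Theorem \ref{theorem:main} for all distance-one slopes. The gap is that you stop exactly where the new content begins: you yourself call the elimination of the surviving slopes with $\Delta(s,\mu_K)\ge 2$ in (iii) and (iv) ``the real heart of the argument,'' but you only assert that the arithmetic hypotheses ``tame'' the Dedekind sums and ``prevent coincidental cancellations,'' without exhibiting any contradiction. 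Moreover your account of how those hypotheses enter is inaccurate: coprimality of $|H_1(M;\Z)|$ with $5$ and $7$ and the condition $6\nmid|H_1(M;\Z)|$ have nothing to do with reciprocity cancellations; combined with the bounds $n\le 3$ (finite filling) and $n\le 8$ (exceptional Seifert fillings) and square-freeness of $p$, they serve only to force $c=\gcd(n,p)$ into $\{1,2,3\}$, so that the surviving pairs are $(c,n)=(2,2),(2,6),(3,3),(3,6)$ (with $(6,6)$ excluded by $6\nmid p$), while $c=1$ is ruled out homologically via Proposition \ref{proposition:1sthomology} and Proposition \ref{prop:key}.

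What is then actually needed, and absent from your proposal, are the two concrete computations disposing of these residual pairs. For $(c,n)=(2,2),(2,6)$ the Casson--Walker invariant is not even used: Proposition \ref{proposition:1sthomology}(i) gives $m=\frac{nq\ell^2}{p}+\varepsilon$, and since $p_0$, $\ell$, $q$ are all odd this forces $m$ to be even, contradicting $\gcd(m,n)=1$ with $n$ even. For $(c,n)=(3,3),(3,6)$ one must substitute the explicit values $3S\left(\frac{m}{3}\right)=\pm 2$ into the identity of Proposition \ref{proposition:CW} and compare residues modulo $3$ against $\frac{\ell^2(q^2+1)}{p_0}\equiv -p_0 \pmod 3$, checking each combination of $\varepsilon=\pm1$ and $m\equiv\pm1\pmod 3$; this is the one place where the rational surgery formula of Theorem \ref{theorem:Q-MP} genuinely earns its keep. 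Without carrying out (or at least precisely formulating) this case-by-case residue analysis, your argument proves the theorem only in the situations already covered by Theorem \ref{theorem:main} --- cases (i), (ii), and the distance-one part of (iii) and (iv) --- and leaves the genuinely new part unproved.
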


We also give an application to another famous conjecture \cite[Problem 1.58]{ki}.

\setcounter{conjecture}{1}
\begin{conjecture}[Cosmetic crossing conjecture]
Let $D$ be a diagram of an oriented knot $K$ in $S^{3}$. If the crossing change of $K$ at a crossing $c$ of $D$ preserves $K$ as oriented knot, then $c$ is nugatory.
\end{conjecture}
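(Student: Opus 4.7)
The plan is to translate the cosmetic-crossing hypothesis into a numerical identity of Casson-Walker invariants via Theorem~\ref{theorem:Q-MP} and then promote that identity to the geometric statement that the crossing circle bounds an embedded disk in the knot exterior. Fix a diagram $D$ and let $C\subset S^{3}\setminus K$ be the crossing circle at $c$: an unknot bounding a disk that meets $K$ transversely in two points of opposite sign, so $\ell(K,C)=0$. The crossing change at $c$ is realized by $(\mp 1)$-surgery on $C$, producing a knot $K'\subset S^{3}$, and the hypothesis is that $K'$ is equivalent to $K$ as an oriented knot. The goal is to show that under this hypothesis $C$ must be isotopic to the unknot in $S^{3}\setminus K$, which is exactly the statement that $c$ is nugatory.

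First I would exhibit two rational surgery presentations of the same 3-manifold and equate their Casson-Walker invariants. For every slope $p/q$,
$$S^{3}_{p/q}(K')\;\cong\;S^{3}_{(p/q,\,\mp 1)}(K\cup C),$$
since performing the $\mp 1$-surgery on $C$ first turns $K$ into $K'$ in $S^{3}$. The cosmetic-crossing hypothesis gives $S^{3}_{p/q}(K)\cong S^{3}_{p/q}(K')$, hence their Casson-Walker invariants agree for every $p/q$. Applying Theorem~\ref{theorem:Q-MP} to the 2-component link $K\cup C$ and the Boyer-Lines one-component specialization to $K$ makes both sides fully explicit. The right-hand side simplifies drastically: $a_{2}(C)=0$ since $C$ is unknotted, the $\mp 1$-framing kills the Dedekind contribution from the $C$-coordinate, and every term carrying $\ell$ vanishes because $\ell(K,C)=0$. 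Taking the difference, all contributions from $a_{2}(K)$, the Dedekind sum $S(p/q)$, the framing $p/q$, and the signature correction $\varsigma/8$ cancel exactly, leaving the scalar identity $v_{3}(K\cup C)=0$. Using the explicit description of $v_{3}$ in Theorem~\ref{theorem:Q-MP} together with $\ell=a_{2}(C)=0$, this collapses to $a_{3}(K\cup C)=0$: the Sato-Levine invariant of the crossing-circle link must vanish.

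Second I would iterate the comparison on the $n$-fold framed parallel of $C$, using Theorem~\ref{theorem:Q-MP} as the inductive base of a multi-component rational surgery formula, in order to extract vanishing of all Milnor $\bar\mu$-invariants of the pair $(K,C)$. Combined with the unknottedness of $C$ and $\ell(K,C)=0$, the resulting algebraic triviality forces $C$ to be null-homotopic in $S^{3}\setminus K$; a loop-theorem / Dehn-lemma argument then upgrades null-homotopy of an unknot in a knot complement to an embedded bounding disk, yielding that $c$ is nugatory. The main obstacle is precisely this final geometric promotion: a single real identity per slope, or even the full countable tower of finite-type constraints obtained by iterating Theorem~\ref{theorem:Q-MP}, need not a priori detect every topologically nontrivial crossing circle, because highly symmetric knots can in principle saturate all such polynomial obstructions. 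Closing that gap is where I expect genuinely new input to be required, and the plan is to feed the rational surgery formula into the Heegaard Floer $d$-invariants of $S^{3}_{p/q}(K)$ and the Reidemeister-Turaev torsion of the pair $(S^{3}_{p/q}(K),\widetilde C)$, both of which are also preserved under a cosmetic crossing and which refine the linking-matrix data of $K\cup C$ in a way that sees the ambient chirality of $K$ at $c$.
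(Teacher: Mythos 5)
The statement you are proving is the Cosmetic Crossing Conjecture itself, which is an open problem: the paper does not prove it, and only establishes special cases (Theorem \ref{theorem:cosmetic} and Corollary \ref{cor:cosmetic}), by a quite different route. The paper works in the double branched cover: the crossing arc lifts to a knot $\widetilde{\gamma}\subset\Sigma(K)$, the crossing change becomes a distance-two Dehn surgery on $\widetilde{\gamma}$ by the Montesinos trick, and the knot-complement results (Theorems \ref{theorem:L-space} and \ref{theorem:main}, the latter resting on the Casson--Walker constraint of Proposition \ref{prop:key}) force $\widetilde{\gamma}$ to be an unknot, whence the crossing is nugatory by Lidman--Moore. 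Your proposal instead works downstairs with the crossing circle $C\subset S^{3}\setminus K$ and tries to prove the full conjecture; that cannot succeed with the tools on offer, and you essentially concede this yourself.

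Concretely, there are three gaps. First, the output of your comparison of Casson--Walker invariants is a single finite-type condition ($v_{3}(K\cup C)=0$, i.e.\ vanishing of the Sato--Levine invariant of $K\cup C$), and even the full tower obtained by iterating on parallels of $C$ only yields finite-type/Milnor-type vanishing; such polynomial obstructions are known not to characterize topologically trivial crossing circles, so ``closing that gap'' is not a technical step but the entire content of the conjecture. Second, vanishing of Milnor invariants does not imply that $C$ is null-homotopic in $S^{3}\setminus K$, and, more seriously, null-homotopy of $C$ in $S^{3}\setminus K$ does not upgrade to an embedded disk disjoint from $K$: the loop theorem and Dehn's lemma apply to curves in the boundary of a $3$-manifold, and the relevant curve on $\partial N(C)$ being null-homotopic in the exterior of $K\cup C$ is a strictly stronger condition than $[C]=1$ in $\pi_{1}(S^{3}\setminus K)$; there are null-homotopic unknots in knot complements bounding no embedded disk there. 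Third, the appeal to $d$-invariants and Reidemeister--Turaev torsion at the end is a list of hoped-for refinements rather than an argument. If you restrict attention to the hypotheses of Theorem \ref{theorem:cosmetic}, the viable strategy is the paper's: transport the problem to $\Sigma(K)$, where the Casson--Walker surgery formula of Theorem \ref{theorem:Q-MP} actually has enough traction (via Proposition \ref{prop:key}) to rule out the non-null-homologous case, and Heegaard Floer theory handles the null-homologous case.
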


Here a crossing $c$ is \emph{nugatory} if there is a circle $C$ in the projection plane that transverse to the diagram $D$ with only at the crossing $c$ (see Figure \ref{fig:nugatory} (i) -- for a diagram-free description of nugatory crossing we refer to \cite{ka,lm}. 

\begin{figure}[htbp]
\includegraphics*[width=90mm]{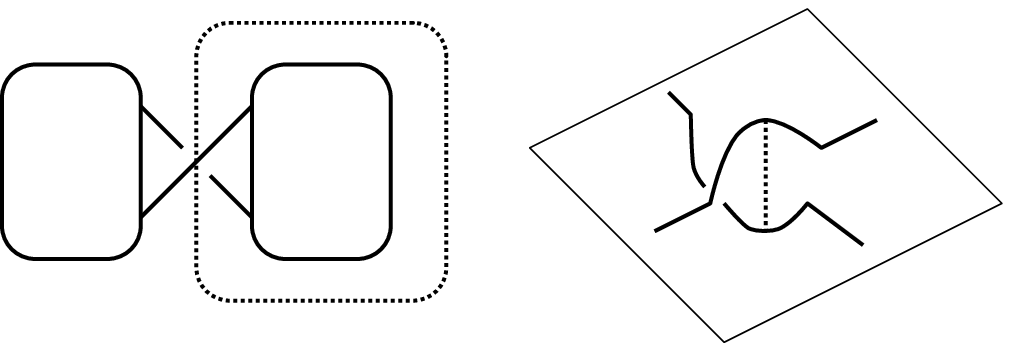}
\begin{picture}(0,0)
\put(-280,80) {(i)}
\put(-130,80) {(ii)}
\put(-220,10) {$C$}
\put(-250,45) {\LARGE $D'$}
\put(-185,45) {\LARGE $D''$}
\put(-80,15) {\rotatebox{-50}{$\R^2$}}
\put(-60,40) {$\gamma$}
\end{picture}
\caption{(i) Nugatory crossing (ii) crossing arc $\gamma$} 
\label{fig:nugatory}
\end{figure} 

The cosmetic crossing conjecture has been verified for several cases; 2-bridge knots \cite{to}, fibered knots \cite{ka}, some satellite knots \cite{bk}, and genus one knots under some assumptions, like non-algebraically-sliceness \cite{bfkp}. Recently extending results in \cite{bfkp}, the author showed the cosmetic crossing conjecture for genus one knots with non-trivial Alexander polynomial \cite{ito2}.

In \cite{lm}, the cosmetic crossing conjecture is confirmed for the following case.

\begin{theorem}\cite{lm}
\label{theorem:Lidman-Moore}
Let $K$ be a knot in $S^{3}$. If the double branched covering $\Sigma(K)$ is an L-space such that each summand of its first homology $H_1(\Sigma(K);\Z)$ has square-free order, then $K$ satisfies the cosmetic crossing conjecture.
\end{theorem}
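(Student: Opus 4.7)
The plan is to recast the cosmetic crossing problem as a Dehn surgery problem in the double branched cover $\Sigma(K)$ and then apply theorems on knots being determined by their complements. First, pass to the double branched cover $\Sigma(K)\to S^{3}$. Since the crossing arc $\gamma$ at $c$ has both endpoints on the branch locus $K$, its preimage $\widetilde{\gamma}\subset \Sigma(K)$ is a knot. A standard application of the Montesinos trick shows that a crossing change at $c$ is realized in $\Sigma(K)$ by a non-trivial rational Dehn surgery on $\widetilde{\gamma}$ with slope of the form $\pm 1/2$ relative to the framing induced by the crossing disk. Moreover, $c$ is nugatory if and only if $\widetilde{\gamma}$ bounds an embedded disk in $\Sigma(K)$. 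Assuming the crossing change preserves $K$ as an oriented knot, the downstairs homeomorphism lifts to an orientation-preserving homeomorphism $\Sigma(K)\cong \Sigma(K)_{\widetilde{\gamma}}(s)$, so the goal becomes to show that $\widetilde{\gamma}$ bounds a disk in $\Sigma(K)$.

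I would split into two cases according to whether $\widetilde{\gamma}$ is null-homologous in $\Sigma(K)$. If it is, Gainullin's theorem (Theorem \ref{theorem:L-space}) applies directly: since $\Sigma(K)$ is an L-space, $\widetilde{\gamma}$ is determined by its complement, so $s$ must be equivalent to the meridian $\mu_{\widetilde{\gamma}}$; tracking what this forces on $\widetilde{\gamma}$ itself then produces the desired disk. If instead $\widetilde{\gamma}$ is not null-homologous, let $d > 1$ denote the order of $[\widetilde{\gamma}] \in H_1(\Sigma(K);\Z)$. The hypothesis that each summand of $H_1(\Sigma(K);\Z)$ is square-free implies that the exponent of $H_1(\Sigma(K);\Z)$, and hence $d$, is square-free. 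Two ingredients then combine: (a) the first-homology order count $|H_1(\Sigma(K)_{\widetilde{\gamma}}(s))| = |H_1(\Sigma(K))|$, which under square-freeness of $d$ yields an arithmetic constraint on $s$ analogous to the one behind Theorem \ref{theorem:1sthomology}; and (b) Heegaard Floer $d$-invariant computations coming from the L-space hypothesis, which, following Gainullin, rule out the remaining candidate slopes. Together these force $d = 1$, reducing the problem to the first case.

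The main obstacle is the non-null-homologous case, since the crossing-change slope $s$ does not in general satisfy $\Delta(s,\mu_{\widetilde{\gamma}}) = 1$, and so Theorem \ref{theorem:1sthomology} cannot be cited verbatim. One must replace it with a half-integer surgery analogue that leverages the stronger hypothesis that \emph{each} summand of $H_1(\Sigma(K);\Z)$ is square-free, together with the Heegaard Floer correction term computations available for L-spaces. A secondary technical point is descending the conclusion that $\widetilde{\gamma}$ bounds a disk in $\Sigma(K)$ to the statement that $c$ is nugatory in $S^{3}$; this is handled by averaging the disk under the covering involution so that it projects to a sphere in $S^{3}$ meeting $K$ transversally in exactly the two endpoints of $\gamma$.
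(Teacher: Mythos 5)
First, a point of orientation: the paper does not prove this statement itself --- it is quoted from \cite{lm} --- but its proof of the analogous Theorem \ref{theorem:cosmetic} follows exactly the skeleton you describe: the Montesinos trick turns the cosmetic crossing change into a distance-two surgery on $\widetilde{\gamma}\subset\Sigma(K)$ returning $\Sigma(K)$, the non-null-homologous case is excluded by an invariant-theoretic constraint, the null-homologous case is handled by Gainullin's L-space result (Theorem \ref{theorem:L-space}), and nugatoriness is deduced from unknottedness of $\widetilde{\gamma}$ via \cite[Proposition 3.3]{lm}. So your outline has the right shape, but the two steps that carry the mathematical weight are left as gaps.

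Concretely: (a) In the null-homologous case, ``$\widetilde{\gamma}$ is determined by its complement'' does not by itself yield a disk. Knowing that the distance-two slope $s$ is equivalent to $\mu_{\widetilde{\gamma}}$ only gives a self-homeomorphism of the exterior carrying $s$ to the meridian; the passage from this to ``$\widetilde{\gamma}$ is unknotted in $\Sigma(K)$'' is precisely the substantive Heegaard Floer input (in \cite{lm} it is a mapping-cone/$d$-invariant argument tailored to half-integer cosmetic surgeries on null-homologous knots in L-spaces; in this paper the corresponding step is the appeal to the content of \cite{ga} behind Theorem \ref{theorem:L-space}). Your phrase ``tracking what this forces on $\widetilde{\gamma}$'' leaves exactly this step unproved. (b) In the non-null-homologous case you correctly observe that Theorem \ref{theorem:1sthomology} cannot be cited verbatim because $\Delta(s,\mu_{\widetilde{\gamma}})=2$, but you only assert that a ``half-integer surgery analogue'' exists; that analogue is the heart of the theorem. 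In \cite{lm} it is a purely homological lemma exploiting the hypothesis that \emph{every summand} of $H_1(\Sigma(K);\Z)$ has square-free order (your reduction to ``the exponent, hence the order $d$ of $[\widetilde{\gamma}]$, is square-free'' discards information and is not obviously sufficient), while in the paper's Theorem \ref{theorem:cosmetic} the corresponding role is played by the Casson--Walker constraint of Proposition \ref{prop:key}, usable because $\gcd(2,p)=1$; you supply neither. (c) Finally, ``averaging the disk under the covering involution'' is not a defined operation: to descend from a disk bounded by $\widetilde{\gamma}$ to a nugatory circle for $c$ one needs an \emph{equivariant} disk, obtained by an equivariant Dehn lemma type argument, which is the content of \cite[Proposition 3.3]{lm} and should be invoked (or reproved), not improvised.
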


We give an extension of Theorem \ref{theorem:Lidman-Moore} that allows the case $\det(K)=|H_1(\Sigma(K);\Z)|$ has a square factor $3^{2}$.

\begin{theorem}
\label{theorem:cosmetic}
Let $K$ be a knot in $S^{3}$. Assume that $K$ satisfies the following conditions.
\begin{itemize}
\item[(a)] The double branched covering $\Sigma(K)$ is an L-space.
\item[(b)] $\Sigma(K)$ is obtained by a Dehn surgery on a knot in $S^{3}$.
\item[(c)] $\det(K)=9p'$ where $p'$ is square-free and coprime to $3$.
\end{itemize}
Then $K$ satisfies the cosmetic crossing conjecture.
\end{theorem}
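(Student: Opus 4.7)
The plan is the standard reduction of a cosmetic crossing problem to a Dehn surgery realization problem in the double branched cover $\Sigma(K)$, followed by an application of (a $\Delta=2$ refinement of) Theorem \ref{theorem:main}.

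Suppose for contradiction that $K$ admits a non-nugatory cosmetic crossing at a crossing $c$. Let $\gamma$ be the corresponding crossing arc as in Figure \ref{fig:nugatory}(ii), and let $\widetilde{\gamma}\subset \Sigma(K)$ be its lift to the double branched cover. By the standard construction in, e.g.\ \cite{ka, lm}, the cosmetic crossing produces an orientation-preserving homeomorphism
\[
\Sigma(K)_{\widetilde{\gamma}}(s) \cong \Sigma(K)
\]
for a slope $s$ on $\widetilde{\gamma}$ with $\Delta(s,\mu_{\widetilde{\gamma}})=2$, and the non-nugatoriness of $c$ implies that $\widetilde{\gamma}$ is a non-trivial knot in $\Sigma(K)$.

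The next step is to split the argument according to the homology class $[\widetilde{\gamma}]\in H_1(\Sigma(K);\Z)$. If $[\widetilde{\gamma}]=0$, then since $\Sigma(K)$ is an L-space by hypothesis (a), Theorem \ref{theorem:L-space} combined with the standard arguments of \cite{lm} for the null-homologous case rules out a cosmetic surgery with $\Delta(s,\mu_{\widetilde{\gamma}})=2$. If $[\widetilde{\gamma}]\neq 0$, hypotheses (b) and (c) place us in the setting of Theorem \ref{theorem:main} with $d=3$: $\Sigma(K)$ is obtained by Dehn surgery on a knot in $S^{3}$ and $|H_1(\Sigma(K);\Z)|=\det(K)=9p'$ with $p'$ square-free and coprime to $3$. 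I then invoke the $\Delta=2$ refinement of Theorem \ref{theorem:main} alluded to in the paragraph following its statement, proved via the rational surgery formula, Theorem \ref{theorem:Q-MP}. This refinement forbids $\Sigma(K)_{\widetilde{\gamma}}(s)\cong \Sigma(K)$ under these hypotheses, giving the desired contradiction.

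The main obstacle is the $\Delta=2$ refinement of Theorem \ref{theorem:main} in the non-null-homologous case. Concretely, one applies Theorem \ref{theorem:Q-MP} to the $2$-component link in $S^{3}$ formed by $\widetilde{\gamma}$ (viewed inside a surgery description of $\Sigma(K)$) together with the knot in $S^{3}$ whose surgery produces $\Sigma(K)$, and computes $\lambda_w(\Sigma(K)_{\widetilde{\gamma}}(s))-\lambda_w(\Sigma(K))$ as an explicit combination of Conway coefficients, linking numbers, and Dedekind sums. The delicate arithmetic step is to show that, reduced modulo a suitable divisor of $|H_1(\Sigma(K);\Z)|=9p'$, the Dedekind-sum contribution fails to vanish precisely when $d=3$ and $p'$ is coprime to $3$. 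This is the same phenomenon responsible for the restriction $d\in\{1,2,3,6\}$ in Theorem \ref{theorem:main} and for the exclusion of the primes $5$ and $7$ in Theorem \ref{theorem:main-finite}(iv); the case $d=3$, $\Delta=2$ needed here is what forces the hypothesis $\det(K)=9p'$ with $\gcd(p',3)=1$.
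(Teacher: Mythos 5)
Your overall route --- Montesinos trick, splitting on the homology class of $\widetilde{\gamma}$, disposing of the null-homologous case via Theorem \ref{theorem:L-space} and \cite{lm} (your use of the contrapositive of \cite[Proposition 3.3]{lm} is fine), and attacking the non-null-homologous case with the Casson--Walker machinery --- is exactly the paper's. The gap is that in the non-null-homologous case you defer everything to an unproven ``$\Delta=2$ refinement'' of Theorem \ref{theorem:main}, declare it the main obstacle, and do not actually overcome it. The missing observation is elementary: $\det(K)=9p'$ is odd, so $\gcd\bigl(\Delta(s,\mu_{\widetilde{\gamma}}),|H_1(\Sigma(K);\Z)|\bigr)=\gcd(2,9p')=1$. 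Propositions \ref{proposition:1sthomology}, \ref{proposition:CW}, \ref{proposition:Dsum} and \ref{prop:key} are stated and proved for arbitrary $n=\Delta(s,\mu_K)$; the hypothesis $\Delta=1$ enters the proof of Theorem \ref{theorem:main} only to guarantee $c=\gcd(n,p)=1$, i.e.\ $p_0=p$. With $n=2$ and $p$ odd this holds verbatim, and then $p=p_0=9p'$ with $\widetilde{\gamma}$ non-null-homologous forces $d_0=3$ and $\gcd(d_0,p_0')=\gcd(3,p')=1$, contradicting Proposition \ref{prop:key}. No new surgery-formula computation is required; this reduction (via the oddness of the determinant) is precisely how the paper handles $\Delta=2$.

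Moreover, the sketch you give of the would-be refinement misidentifies the arithmetic mechanism: you claim the contradiction is that ``the Dedekind-sum contribution fails to vanish,'' but by Proposition \ref{proposition:Dsum} the Dedekind term is $\equiv 0 \pmod{p_0}$, hence modulo $d_0$; in Proposition \ref{prop:key} the contradiction comes instead from the term $n_0\ell^2(q^2+1)/p_0=n_0p_0'\ell'^{2}(q^{2}+1)$, which the surgery identity forces to vanish modulo $d_0=3$, yielding $q^2+1\equiv 0 \pmod 3$ --- impossible since $-1$ is not a square mod $3$. So as written your plan for the key step would not close; the correct closing move is the reduction to Proposition \ref{prop:key} described above.
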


Although the assumption (b) is in general hard to check, by Montesinos trick \cite{mo}, we replace the assumption (b) with a weak but often easier to confirm assumptions.

\begin{corollary}
\label{cor:cosmetic}
Let $K$ be a knot in $S^{3}$. Assume that $K$ satisfies the following conditions.
\begin{itemize}
\item[(a)] The double branched covering $\Sigma(K)$ is an L-space.
\item[(b$'$)] The unknotting number or the $H(2)$-unknotting number of $K$ is one. 
\item[(c)] $\det(K)=9p'$ where $p'$ is square-free and coprime to $3$.
\end{itemize}
Then $K$ satisfies the cosmetic crossing conjecture.
\end{corollary}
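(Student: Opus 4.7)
The plan is to deduce Corollary \ref{cor:cosmetic} from Theorem \ref{theorem:cosmetic} by verifying, under the weaker hypothesis (b$'$), that assumption (b) of the theorem still holds. Conditions (a) and (c) are identical in both statements, so the only thing to check is that $\Sigma(K)$ arises as a Dehn surgery on a knot in $S^{3}$. This is precisely the content of the Montesinos trick \cite{mo}, which I will invoke in two flavours depending on which part of (b$'$) is satisfied.

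Suppose first that the unknotting number of $K$ is one. Then there is a diagram $D$ of $K$ and a crossing $c$ of $D$ such that the crossing change at $c$ produces the unknot $U$. The Montesinos trick says that the preimage in $\Sigma(K)$ of the crossing arc at $c$ is a knot $J \subset \Sigma(K)$ such that an appropriate half-integral Dehn surgery on $J$ yields $\Sigma(U) = S^{3}$. Reversing the direction of this surgery description, $\Sigma(K)$ is itself obtained by a half-integral Dehn surgery on a knot in $S^{3}$, which verifies (b). Suppose instead that the $H(2)$-unknotting number of $K$ is one, so that $K$ can be unknotted by a single band (i.e.\ $H(2)$) move. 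The $H(2)$ version of the Montesinos trick then exhibits $\Sigma(K)$ as an \emph{integral} Dehn surgery on a knot in $\Sigma(U) = S^{3}$, again giving (b).

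Once (b) is in hand, all three hypotheses of Theorem \ref{theorem:cosmetic} are satisfied, and applying that theorem directly yields the cosmetic crossing conjecture for $K$.

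There is essentially no obstacle here beyond citing the correct form of the Montesinos trick: the half-integral surgery statement in the crossing-change case is the classical version due to Montesinos, and the integral surgery statement for band moves is the standard $H(2)$-analogue. The only mild care needed is to confirm that the Dehn surgery produced by the trick is on a knot in $S^{3}$ (which uses that the other knot in the $H(2)$- or crossing-change pair is the unknot, so its double branched cover is $S^{3}$), and that assumption (b) of Theorem \ref{theorem:cosmetic} is phrased broadly enough to include rational (in particular, half-integral) surgeries on knots in $S^{3}$, which it is.
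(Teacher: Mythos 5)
Your proposal is correct and matches the paper's own (implicit) argument: the paper justifies the corollary exactly by invoking the Montesinos trick to see that unknotting number one gives $\Sigma(K)$ as a half-integral surgery on a knot in $S^{3}$, and $H(2)$-unknotting number one gives it as an integral surgery, so that hypothesis (b) of Theorem \ref{theorem:cosmetic} holds and that theorem applies directly.
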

Here the $H(2)$-unknotting number of $K$ is the minimum number of $H(2)$-move (see Figure \ref{fig:H2move}) needed to transform $K$ into the unknot (see \cite{kami,nak,zek}).

\begin{figure}[htbp]
\includegraphics*[width=65mm]{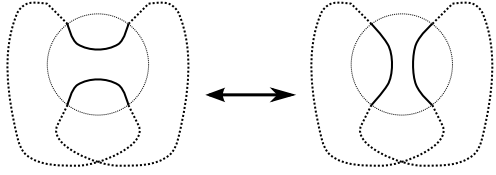}
\caption{$H(2)$-move} 
\label{fig:H2move}
\end{figure} 

According to \cite[Theorem 1.4]{lm}, from the aforementioned known results, the cosmetic crossing conjecture is confirmed for knots with at most ten crossings, except
\[ 10_{65},10_{66},10_{67},10_{77},10_{87},10_{98},10_{108},10_{129},
10_{147},10_{164}.\]

Theorem \ref{theorem:cosmetic} proves the conjecture for half of them,   $10_{65},10_{67},10_{77},10_{108},10_{164}$. 
\begin{corollary}
\label{cor:ten}
Let $K$ be a knot in $S^{3}$ with at most ten crossings. Then $K$ satisfies the cosmetic crossing conjecture, possibly except
\[ 10_{66}, 10_{87}, 10_{98},10_{147}, 10_{129}\]
\end{corollary}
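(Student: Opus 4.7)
The plan is to apply Corollary~\ref{cor:cosmetic} to each of $10_{65}, 10_{67}, 10_{77}, 10_{108}, 10_{164}$, checking the three hypotheses (a), (b$'$), (c) against standard tabulated data such as KnotInfo and \cite{kami,nak,zek}. By Theorem~\ref{theorem:Lidman-Moore} together with the other results mentioned immediately before the corollary, every ten-crossing knot outside the explicit ten-knot exceptional list already satisfies the cosmetic crossing conjecture, so only these ten knots require individual attention.

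For condition (c), one first tabulates determinants: $\det(10_{65})=\det(10_{77})=\det(10_{108})=\det(10_{164})=45=9\cdot 5$ and $\det(10_{67})=63=9\cdot 7$, each of the form $9p'$ with $p'$ square-free and coprime to $3$. In contrast, the remaining five knots $10_{66}, 10_{87}, 10_{98}, 10_{129}, 10_{147}$ have determinants that fail this template (for instance $\det(10_{66})=75=3\cdot 5^{2}$ has the wrong square factor), which is precisely why Theorem~\ref{theorem:cosmetic} cannot be applied to them. For condition (a), the four alternating knots $10_{65}, 10_{67}, 10_{77}, 10_{108}$ have L-space double branched covers by a classical result of Ozsv\'ath-Szab\'o, while the non-alternating knot $10_{164}$ is quasi-alternating (see Champanerkar-Kofman's tables), so its double cover is likewise an L-space. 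For condition (b$'$), each of the five knots is recorded as having unknotting number equal to one or $H(2)$-unknotting number equal to one; the $H(2)$-option in Corollary~\ref{cor:cosmetic} is essential for those knots whose ordinary unknotting number exceeds one.

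The main obstacle is not genuinely mathematical but a careful bookkeeping task: one must consistently cross-reference the tabulated determinants, (quasi-)alternation data, and (un)knotting invariants against the hypotheses of Corollary~\ref{cor:cosmetic} for each individual knot, taking care that the $H(2)$-unknotting data one cites is compatible with the Montesinos-trick derivation used to obtain hypothesis (b) of Theorem~\ref{theorem:cosmetic}. Once all three conditions have been verified for the five knots above, Corollary~\ref{cor:cosmetic} immediately yields the cosmetic crossing conjecture for each of them, leaving as the residual ten-crossing exceptional cases exactly $10_{66}, 10_{87}, 10_{98}, 10_{147}, 10_{129}$, as asserted.
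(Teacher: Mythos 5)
There is a genuine gap in how you handle $10_{65}$ and $10_{77}$. You route all five knots through Corollary~\ref{cor:cosmetic}, asserting that ``each of the five knots is recorded as having unknotting number equal to one or $H(2)$-unknotting number equal to one.'' That is not available for $10_{65}$ and $10_{77}$: their unknotting numbers exceed one, and the $H(2)$-unknotting number tables of Nakajima and Zekovi\'c give $H(2)$-unknotting number one only for $10_{67}$ and $10_{108}$ among the relevant knots (the paper also uses unknotting number one for $10_{164}$). So hypothesis (b$'$) fails (or at least is not recorded) for $10_{65}$ and $10_{77}$, and Corollary~\ref{cor:cosmetic} cannot be invoked for them. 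The paper instead verifies hypothesis (b) of Theorem~\ref{theorem:cosmetic} directly for these two knots: they are the Montesinos knots $M(\tfrac{3}{4},\tfrac{1}{3},\tfrac{2}{3})$ and $M(\tfrac{1}{2},\tfrac{2}{3},\tfrac{5}{2})$, whose double branched covers are the $\tfrac{63}{5}$-surgery on $T_{3,4}$ and the $\tfrac{63}{10}$-surgery on $T_{2,3}$, hence are Dehn surgeries on knots in $S^{3}$. Without either this Seifert-fibered/Montesinos description or some other verification of (b), your argument does not cover $10_{65}$ and $10_{77}$, and the claimed exceptional list would have to grow.

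A secondary factual slip: your determinant table is partly wrong. Consistent with the surgery descriptions above, $\det(10_{65})=\det(10_{77})=63=9\cdot 7$, not $45$; this does not affect condition (c), since $63$ is still of the form $9p'$ with $p'$ square-free and coprime to $3$, but it should be corrected. The remaining structure of your argument (reducing to the ten exceptional knots of Lidman--Moore, checking (a) via alternating/quasi-alternating double branched covers being L-spaces, and using (b$'$) for $10_{67}$, $10_{108}$, $10_{164}$) matches the paper's proof.
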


\section*{Acknowledgement}
The author has been partially supported by JSPS KAKENHI Grant Number 19K03490,16H02145. 
He would like to thank K. Ichihara for stimulating conversations and comments for knots detrmined by their complements, and T. Kanenobu for a helpful correspondence.

\section{Rational surgery formula of Casson-Walker invariant}
\label{section:Q}

In this section we prove Theorem \ref{theorem:Q-MP}. Our proof is based on a rational surgery formula of the LMO invariant $Z^{LMO}$ due to Bar-Natan and Lawrence \cite{bl}. Since $\frac{\lambda_w(M)}{4}=Z^{LMO}_1(M)$ \cite{lmo,lmmo}, where $Z^{LMO}_1(M)$ is the degree one part of the LMO invariant, by explicitly writing down the surgery formula of the LMO invariant up to degree one we get the desired surgery formula of the Casson-Walker invariant.

In the following, we assume some familiarity with the theory of the Kontsevich invariant and the LMO invariant. A standard reference is \cite{oht}. In the rational surgery formula, the Aahurs integral construction of the LMO invariant \cite{bgrt1,bgrt2,bgrt3} and the Wheeling theorem \cite{blt} play fundamental roles.

The reader may understand this section as a demonstration of how to apply the rational surgery formula.
See also \cite{ito1}, where we did a similar calculation of the rational surgery formula for degree $2$ and $3$ parts, for the Dehn surgery on knots in $S^{3}$. 

\begin{proof}[Proof of Theorem \ref{theorem:Q-MP}]
Let $L=K_x \cup K_y$ be a rationally framed 2-component link where the framing of the component $K_x$ and $K_y$ are $f_x=\frac{p_x}{q_x}$ and $f_y=\frac{p_y}{q_y}$, respectively. We denote by $L_0$ the link $L$ such that both $K_x$ and $K_y$ have zero framings.

Let $Z(L_0) \in \mathcal{A}(\circlearrowleft_x,\circlearrowleft_y)$ be the Kontsevich invariant of $L_0$. By composing the inverse of the Poincar\'e-Birkoff-Witt isomorphism
\[ \sigma: \mathcal{A}(\circlearrowleft_x,\circlearrowleft_y) \rightarrow \mathcal{A}(\circledast_x,\circledast_y), \]
we view the Kontsevich invariant $Z(L_0)$ so that it takes value in the space $\mathcal{A}(\circledast_x,\circledast_y)$. 
The low-degree parts of $\sigma \circ Z^{\sigma}$ that is relevant in the computation of the degree one part of the LMO invariant is 
\begin{align*}
\sigma\circ Z(L_0) &= 1+ \left(-\frac{a_2(K_x)}{2}+\frac{1}{48}\right)\Atwoxx + \left(-\frac{a_2(K_y)}{2}+\frac{1}{48}\right) \Atwoyy + v_3(L)\Htree\\
& \qquad + \ell \strutxy + \frac{\ell^2}{2} \strutxy \strutxy + \mbox{(other terms)}.
\end{align*}
Here the (other terms) represents the rest of the Kontsevich invariant that are irrelevant to the degree one part of the LMO invariant.

The Wheeled Kontsevich invariant $Z^{\sf Wheel}(L_0):=\widehat{\Omega}_x^{-1}\widehat{\Omega}_y^{-1}(\sigma \circ Z^{\sigma}(L_0))$ is 
\begin{align*}
Z^{\sf Wheel}(L_0) & = 1+
\left(-\frac{a_2(K_x)}{2}+\frac{1}{48}-\frac{\ell^2}{48}\right) \Atwoxx + \left(-\frac{a_2(K_y)}{2}+\frac{1}{48}-\frac{\ell^2}{48}\right) \Atwoyy \\
& \qquad + v_3(L)\Htree +\ell \strutxy + \frac{\ell^2}{2} \strutxy \strutxy + \mbox{(other terms)}
\end{align*}
Here $\widehat{\Omega}_x^{-1}: \mathcal{A}(\circledast_x,\circledast_y)  \rightarrow \mathcal{A}(\circledast_x,\circledast_y) $ is a map defined as follows. 
First, $\Omega_x$ is the wheel element, the Kontsevich invariant of the unknot \cite{blt}
\[ \Omega_x = 1+ \frac{1}{48}\Atwoxx+\cdots \] 
whose legs are marked by $x$. For a Jacobi diagram $D$, $\widehat{\Omega}_x^{-1}(D)$ is the sum of all ways of gluing the legs of 
\[ \Omega_x^{-1} = 1 - \frac{1}{48}\Atwoxx+\cdots\]
 with some legs of $D$ marked by $x$. The definition of $\Omega_y$ and $\widehat{\Omega}_y^{-1}$ are similar.

By the rational surgery formula \cite[Theorem 6]{bl} of the LMO invariant, if $S^{3}_L$ is a rational homology sphere, $Z^{LMO}(S^{3}_L)$ is written by the formal Gaussian integration
\begin{align*}
Z^{LMO}(S^{3}_L) & = \exp\left( \frac{\theta}{48} \left(3 \varsigma(L)+ S(f_x)-f_x + S(f_y)-f_y\right) \right) \\
& \quad \cdot \int^{FG} Z^{\sf Wheel}(L_0)\sqcup \exp(\frac{f_x}{2}\strutxx) \sqcup \exp( \frac{f_y}{2} \strutyy) \sqcup \Omega_{x\slash q_x} \sqcup \Omega_{y \slash q_y} dxdy.
\end{align*}
Here 
\begin{itemize}
\item $\sqcup$ denotes the disjoint union product.
\item $\displaystyle\Omega_{x\slash f_x} = 1+ \frac{1}{48q_x^{2}} \Atwoxx + \cdots \quad (\mbox{resp. } \Omega_{y\slash f_y} = 1+ \frac{1}{48q_y^{2}} \Atwoyy + \cdots)$
is the wheel element $\Omega$ whose legs are marked by $\frac{x}{q_x}$ (resp. $\frac{y}{q_y}$).
\item  $\theta$ denotes the theta-shaped Jacobi diagram. 
\item $\varsigma(L)$ is the signature of the linking matrix of $L$.
\end{itemize}
Then
\begin{align*}
& Z^{\sf Wheel}(L_0)\sqcup \exp(\frac{f_x}{2} \strutxx) \sqcup \exp( \frac{f_y}{2} \strutyy) \sqcup \Omega_{x\slash f_x} \sqcup \Omega_{y \slash f_y}\\
=& \Biggl( 1+
\left(-\frac{a_2(K_x)}{2}+\frac{1}{48}-\frac{\ell^2}{48} +\frac{1}{48q_x^2} \right) \Atwoxx \\
& + \left(-\frac{a_2(K_y)}{2}+\frac{1}{48}-\frac{\ell^2}{48} +\frac{1}{48q_y^2}\right) \Atwoyy  + v_3(L)\Htree +\mbox{(other terms)} \Biggr) \\
& \sqcup \exp\left( \frac{f_x}{2} \strutxx + \ell \strutxy + \frac{f_y}{2} \strutyy \right).
\end{align*}
Since the inverse of the linking matrix is $\displaystyle\begin{pmatrix} f_x & \ell \\ \ell & f_y \end{pmatrix}^{-1} = \frac{1}{D}\begin{pmatrix} f_y & -\ell \\ -\ell & f_x \end{pmatrix}$ where $D \neq 0$ is the determinant,  following the definition of the formal Gaussian integration (see \cite{bgrt1,bgrt2,bgrt3}, 
\begin{align*}
&\int^{FG} Z^{\sf Wheel}(L_0)\sqcup \exp(\frac{f_x}{2}\strutxx) \sqcup \exp( \frac{f_y}{2} \strutyy) \sqcup \Omega_{x\slash q_x} \sqcup \Omega_{y \slash q_y} dxdy \\
= & \Biggl\langle 1+ \left(-\frac{a_2(K_x)}{2}+\frac{1}{48}-\frac{\ell^2}{48} + \frac{1}{48q_x^2}\right) \Atwoxx \\
& + \left(-\frac{a_2(K_y)}{2}+\frac{1}{48}-\frac{\ell^2}{48}+\frac{1}{48q_y^2}\right) \Atwoyy + v_3(L)\Htree + \mbox{(other terms)} \\
& ,  \exp\left( -\frac{f_y}{2D} \strutxx + \frac{\ell}{D} \strutxy - \frac{f_x}{2D} \strutyy \right)  \Biggr\rangle 
\end{align*}

Here $\langle X, Y \rangle$ means the sum of all ways of gluing the legs of $X$ to the legs of $Y$ so that their markings coincide. When the number of $x$ and $y$ marked legs of $X$ and $Y$ are not the same, the pairing is defined to be zero.

Let $[\int^{FG}]_1$ be the degree one part of the formal Gaussian integral $\displaystyle \int^{FG} Z^{\sf Wheel}(L_0)\sqcup \exp(\frac{f_x}{2}\strutxx) \sqcup \exp( \frac{f_y}{2} \strutyy) \sqcup \Omega_{x\slash q_x} \sqcup \Omega_{y \slash q_y} dxdy$.
Then 
\begin{align*}
[{\textstyle\int^{FG}}]_1&=  \left \langle\left(-\frac{a_2(K_x)}{2}+\frac{1}{48}-\frac{\ell^2}{48} +\frac{1}{48q_x^2} \right) \Atwoxx, -\frac{f_x}{2D}\strutxx \right \rangle\\ 
& +  \left\langle \left(-\frac{a_2(K_y)}{2}+\frac{1}{48}-\frac{\ell^2}{48}+\frac{1}{48q_y^2}\right) \Atwoyy, -\frac{f_y}{2D}\strutyy \right \rangle\\
&+ \left\langle  v_3(L)\Htree, \frac{f_xf_y}{4D^2} \strutxx \strutyy \right \rangle + \left \langle  v_3(L)\Htree, \frac{\ell^2}{2D^2} \strutxy \strutxy \right \rangle.\\
\end{align*}
Since 
\begin{align*}
&\left \langle \Atwoxx, \strutxx \right \rangle = 2\theta, & &\left\langle\Atwoyy, \strutyy \right \rangle=2\theta\\
&\left\langle \Htree,  \strutxx \strutyy \right \rangle = 4 \theta,& & \left\langle \Htree,  \strutxy \strutxy \right \rangle = -2\theta,\\
\end{align*}
we get
\begin{align*}
[{\textstyle\int^{FG}}]_1 = & \frac{1}{D} \biggl( \frac{a_2(K_x)f_y}{2}-\frac{f_y}{48}+\frac{\ell^2f_y}{48} -\frac{f_y}{48q_x^2}\\
 & \qquad + \frac{a_2(K_y)f_x}{2} - \frac{f_x}{48}+ \frac{\ell^2f_x}{48} -\frac{f_x}{48q_y^2} + v_3(L)\biggr)\theta.
\end{align*}
Therefore we conclude  
\begin{align*}
\frac{\lambda_w(S^{3}_L)}{4} = &\frac{1}{D} \biggl( \frac{a_2(K_x)f_y}{2}-\frac{f_y}{48}+\frac{\ell^2f_y}{48} -\frac{f_y}{48q_x^2} \\
& \qquad + \frac{a_2(K_y)f_x}{2}-\frac{f_x}{48}+\frac{\ell^2f_x}{48} -\frac{f_x}{48q_y^2} + v_3(L)\biggr) \\
&+ \frac{1}{48}\left( 3 \varsigma(L) + S(f_x)-f_x + S(f_y)-f_y\right)
\end{align*}
as desired.
\end{proof}

\begin{remark}
In the above proof we do not give an explicit formula 
\begin{equation}
\label{eqn:v3} v_3(L)=\frac{1}{2}\left(-a_3(L)+(a_2(K_x)+a_2(K_y))\ell +\frac{1}{12}(\ell^3-\ell)\right) 
\end{equation}
of $v_3(L)$ in terms of the coefficients of the Conway polynomials $\nabla_L(z)$, $\nabla_{K_x}(z)$ and $\nabla_{K_y}(z)$.
 
A cheating, but the easiest way to get the formula (\ref{eqn:v3}) is to compare the rational surgery formula with Matveev-Polyak's integer surgery formula. 

A theoretically more satisfactory way to prove (\ref{eqn:v3}) is to use the Alexander-Conway polynomial weight system evaluations (see \cite{bg,fkv,mv} for details of the Alexander-Conway weight system). Compare \cite[Lemma 2.1]{ito1}, where we used the $\mathfrak{sl}_2$ weight system evaluations to obtain an explicit formula of various coefficients of the Kontsevich invariant of knots.

Let $(\sigma\circ Z)_3(L_0)$ be the degree 3 part of the Kontsevich invariant of $L_0$ and let $W$ be the Alexander-Conway polynomial weight system. Then
\[ W((\sigma\circ Z)_3(L_0)) = a_3(L). \] 
On the other hand, $(\sigma\circ Z)_3(L_0)$ is given by 
\begin{align*}
(\sigma\circ Z)_3(L_0) &= v_3(L) \Htree + w_3(K_x) \Athreexx +w_3(K_y) \Athreeyy \\
 &\quad + \left(-\frac{a_2(K_x)}{2}+\frac{1}{48}\right)\ell \Atwoxx \strutxy + \left(-\frac{a_2(K_y)}{2}+\frac{1}{48}\right)\ell \Atwoxx \strutxy \\
&\quad + \frac{\ell^3}{6} \strutxy \strutxy \strutxy
\end{align*}
Here $w_3(K_x)$ and $w_3(K_y)$ are the primitive degree 3 finite type invariant of the knots $K_x$ and $K_y$, respectively. More explicitly, $w_3(K_x) = -\frac{1}{24}j_3(K_x)$, where $j_3(K_x)$ is the coefficient of $h^{3}$ for the Jones polynomial $V_{K_x}(e^{h})$ \cite[Lemma 2.1]{ito1}.

Since
\begin{align*}
& W(\Htree) = -2, \quad W(\Athreexx) = W(\Athreeyy)=0,\\
& W(\Atwoxx \strutxy) =W(\Atwoyy \strutxy)=-2, \quad W(\strutxy \strutxy \strutxy) =\frac{1}{2} \end{align*}
we get the desired formula
\[ a_3(L) = -2v_3(L) + (a_2(K_x)+a_2(K_y))\ell + \frac{1}{12}(\ell^3-\ell).\]
\end{remark}

\section{Constraint for Dehn surgery on non-null-homologous knot}
\label{section:CW}

Let $M$ be a 3-manifold obtained by the $\frac{p}{q}$-surgery on a knot $K_y$ in $S^{3}$ $(p>0)$. We view $M$ as $M=X(K_y)\cup(S^{1}\times D^{2})$, where $X(K_y)=S^{3} \setminus N(K)$ is the complement of the tubular neighborhood of $K_y$. For a knot $K$ in $M$, we put $K$ so that $K \subset X(K_y) \subset M=X(K_y)\cup (S^{1} \times D^{2})$. Let $K_x$ be the knot in $S^{3}$ which is the image of $K$ under the natural embedding $\iota: X(K_y) \hookrightarrow S^{3} = X(K_y) \cup N(K_y)$. We call the knot $K_x$ in $S^{3}$ \emph{a knot corresponding to the knot $K$}.

For a slope $s$ of the $K \subset M$, let $n=\Delta(s,\mu_K)$.
Since the natural embedding $\iota$ sends the meridian $\mu_{K}$ of the knot $K \subset M$ to the meridian $\mu_{K_x}$ of the knot $K_x \subset S^{3}$, $\iota(s) = \frac{m}{n}$ for some $m \in \Z$. Here for a knot in $S^{3}$ we identify the set of slopes with rational number $\Q$ as usual.

Let $L = K_x \cup K_y$ be the (rationally framed) link in $S^{3}$, where the framing of $K_x$ and $K_y$ are $\frac{m}{n}$ and $\frac{p}{q}$, respectively.
Then $M_K(s) = S^{3}_{L}$ so we are able to use the rational surgery formula to compute $\lambda_w(M_K(s))$.

Under this setting, we study a constraint for $M_K(s) =S^{3}_L=M$ to happen.
Let 
\[ A=\begin{pmatrix} \frac{m}{n} & \ell \\ \ell & \frac{p}{q} \end{pmatrix}\]
be the linking matrix of $L$, where $\ell=lk(L_x,L_y)$ is the linking number of $K_x$ and $K_y$. 

Let $c=\gcd(n,p)$, and we put $p=cp_0$ and $n=cn_0$. 
As the first step, we review the constraint from the order of the 1st homology group in a general setting.

\begin{proposition}
\label{proposition:1sthomology}
Assume that $S^{3}_L=M$.
\begin{enumerate}
\item[(i)] $m=\frac{nq\ell^{2}}{p}+ \varepsilon =\frac{n_0q\ell^{2}}{p_0} + \varepsilon$ 
for $\varepsilon \in \{\pm 1\}$.
\item[(ii)] Let $d_0=p_0 \slash \gcd(p_0,\ell)$. Then $p_0=d_0^2 p_0', \ell = d_0 p_0' \ell'$ with $\gcd(d_0,\ell')=1$. 
\end{enumerate}
\end{proposition}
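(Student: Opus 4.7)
The plan is to compare first-homology orders on the two sides of the hypothesised homeomorphism, which forces an integrality/divisibility relation that gives both parts.

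First I would compute $|H_1(M;\Z)|$ and $|H_1(S^3_L;\Z)|$ separately. On the one hand, $M$ is the $p/q$-surgery on a knot in $S^{3}$, so $|H_1(M;\Z)|=p$. On the other hand, for rational surgery on a two-component link in $S^{3}$ with framings $p_x/q_x,p_y/q_y$ and linking number $\ell$, the order of $H_1$ of the surgered manifold equals $|q_xq_y\det A|=|p_xp_y-q_xq_y\ell^{2}|$; clearing denominators in our situation this gives
\[
|H_1(S^3_L;\Z)|=|mp-nq\ell^{2}|.
\]
Since $S^3_L\cong M$ implies both sides have the same (finite, nonzero) first-homology order, we get $|mp-nq\ell^{2}|=p$, and hence $mp-nq\ell^{2}=\varepsilon p$ with $\varepsilon\in\{\pm 1\}$. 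Dividing by $p$ and using $p=cp_0$, $n=cn_0$ produces
\[
m=\frac{nq\ell^{2}}{p}+\varepsilon=\frac{n_0q\ell^{2}}{p_0}+\varepsilon,
\]
which is statement (i).

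For part (ii) I would extract arithmetic information from the integrality of (i). Since $m\in\Z$ and $\varepsilon=\pm 1$, the fraction $n_0q\ell^{2}/p_0$ must be an integer, so $p_0\mid n_0q\ell^{2}$. By construction $\gcd(n_0,p_0)=1$, and since $p/q$ is a slope we have $\gcd(p,q)=1$ and therefore $\gcd(p_0,q)=1$. Thus $\gcd(p_0,n_0q)=1$, which forces the key divisibility
\[
p_0\mid \ell^{2}.
\]

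Finally I would unpack this divisibility using $d_0=p_0/\gcd(p_0,\ell)$. Writing $g=\gcd(p_0,\ell)$, so $p_0=gd_0$ and $\ell=g\ell'$ with $\gcd(d_0,\ell')=1$, the divisibility $gd_0\mid g^{2}\ell'^{2}$ becomes $d_0\mid g\ell'^{2}$, and coprimality of $d_0$ with $\ell'$ (hence with $\ell'^{2}$) gives $d_0\mid g$. Setting $g=d_0p_0'$ yields $p_0=d_0^{2}p_0'$ and $\ell=d_0p_0'\ell'$ with $\gcd(d_0,\ell')=1$, which is (ii). The only mildly delicate step is the initial homology computation for a rationally framed two-component link; everything else is elementary divisor arithmetic.
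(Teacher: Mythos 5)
Your proposal is correct and follows essentially the same route as the paper: compare $|H_1(S^3_L;\Z)|=|mp-nq\ell^2|$ with $|H_1(M;\Z)|=p$ to get (i), then use $p_0\mid\ell^2$ and coprimality of $d_0$ with $\ell'$ to get (ii). The only difference is that you spell out explicitly why $p_0\mid\ell^2$ (via $\gcd(p_0,n_0q)=1$), a step the paper leaves implicit.
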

\begin{proof}
By comparing the order of the 1st homology group we get 
\begin{equation}
\label{eqn:1sthomology} |H_1(S^{3}_L;\Z)| = |mp-nq\ell^{2}| = p = |H_1(M;\Z)| 
\end{equation}
Hence $mp-nq\ell^2=\varepsilon p$ for $\varepsilon \in \{\pm 1\}$.

To see (ii), we put $p_0= d_0 \gcd(p_0,\ell)$ and $\ell = \gcd(p_0,\ell)\ell'$. Then clearly $\gcd(d_0,\ell')=1$. By (i) $p_0 \mid \ell^2$ so $d_0 \mid \gcd(p_0,\ell) \ell'^{2}$. Since $\gcd(d_0, \ell')=1$, this means that $d_0 \mid \gcd(p_0,\ell)$ hence one can write $\gcd(p_0,\ell)=d_0p'_0$. Consequently we get $p_0=d_0^{2}p_0'$, $\ell=d_0p_0'\ell'$ as desired. 
\end{proof}

Theorem \ref{theorem:1sthomology} in introduction is a direct consequence of the proposition.
\begin{proof}[Proof of Theorem \ref{theorem:1sthomology}]
Assume that $S^{3}_L \cong M$ and that $n=\Delta(s,\mu_K)=1$. Then $c=\gcd(p,n)=1$ so $p_0=p$. By Proposition \ref{proposition:1sthomology} (ii) $p=d_0^{2}p_0'$, $\ell=d_0p_0'\ell'$ with $d_0=p\slash \gcd(p,\ell)$. However, since $p=|H_1(M;\Z)|$ is square-free, $d_0=1$. This implies $p\mid \ell$, which contradicts the assumption that $K$ is a non-mull-homologous knot in $M$.
\end{proof}

We proceed to use the Casson-Walker invariant to improve Proposition
\ref{proposition:1sthomology}. The Casson-Walker invariant counterpart of the equation  (\ref{eqn:1sthomology}) is given as follows. 

\begin{proposition}
\label{proposition:CW}
Assume that $S^{3}_L=M$. Then
\begin{align*}
0 &= 24a_2(K_x)np+24a_2(K_y)q(m-\varepsilon)+(3\varepsilon \varsigma-3 \varepsilon -n)p+(np+mq)\ell^{2}\\
\nonumber & \qquad -\frac{n_0\ell^2(q^2+1)}{p_0} +  24nq(2v_3(L)) +\varepsilon p \left( S\left(\frac{m}{n}\right) -\frac{m+\varepsilon}{n} \right)
\end{align*}
\end{proposition}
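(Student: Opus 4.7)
The strategy is to compute the Casson-Walker invariant of the 3-manifold $M = S^3_L$ in two independent ways and equate.

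\textbf{Step 1 (apply Theorem \ref{theorem:Q-MP} to $L$).} Plug $p_x/q_x = m/n$ and $p_y/q_y = p/q$ into Theorem \ref{theorem:Q-MP}. This gives an expression
\[
D\Bigl(\tfrac{\lambda_w(S^3_L)}{2} - \tfrac{\varsigma}{8}\Bigr) = a_2(K_x)\tfrac{p}{q} + a_2(K_y)\tfrac{m}{n} + \tfrac{p\ell^2}{24q} + \tfrac{m\ell^2}{24n} + 2v_3(L) + \text{(Dedekind and framing corrections)}.
\]
Proposition \ref{proposition:1sthomology}(i) gives $mp - nq\ell^2 = \varepsilon p$, hence $D = \varepsilon p/(nq)$; this lets me replace $D$ on the left and simplify the $D$-scaled Dedekind terms on the right.

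\textbf{Step 2 (Casson-Walker for the single knot $K_y$).} Since $S^3_L = M = S^3_{p/q}(K_y)$, I also have $\lambda_w(M)$ given by the classical rational surgery formula for a single knot in $S^3$ (Walker; equivalently obtained from Theorem \ref{theorem:Q-MP} by taking $K_x$ to be an unknot split from $K_y$, using additivity of $\lambda_w$ under connect sum to peel off the lens-space contribution). This yields an identity of the shape
\[
\lambda_w(S^3_{p/q}(K_y)) = \tfrac{2q}{p}a_2(K_y) + \tfrac{\sigma(p/q)}{4} + \tfrac{1}{12p}\bigl(p\,S(p/q) - \text{linear in }p,q\bigr).
\]

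\textbf{Step 3 (equate and clear denominators).} Setting the two expressions for $\lambda_w$ equal and multiplying through by $24nq$, the factor $nqD = \varepsilon p$ converts $24nq D \cdot \lambda_w/2$ into $12\varepsilon p\,\lambda_w(M)$. Plugging in the formula from Step~2 for $\lambda_w(M)$ cancels the $S(p/q)$ and $p/q$ terms (and a part of the signature), leaving an identity whose nonzero terms all appear in the claimed equation. The terms $24 a_2(K_x)np$ and $(np + mq)\ell^2$ and $48 nq v_3(L)$ come directly from Step~1; the term $24 a_2(K_y) q(m - \varepsilon)$ arises by combining the $a_2(K_y)\tfrac{m}{n}$ term from Step~1 (scaled by $nq/(\varepsilon p)$ after using $D = \varepsilon p/(nq)$) with the $\tfrac{2q}{p}a_2(K_y)$ term from Step~2. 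The fraction $n\ell^2(q^2+1)/p$ (which combines the $p\ell^2/(24q) \cdot \tfrac{nq}{\varepsilon p}$-like contributions and the $1/(qn^2)$-, $1/(nq^2)$-framing correction terms in Theorem \ref{theorem:Q-MP}) simplifies to $n_0\ell^2(q^2+1)/p_0$ after writing in lowest terms via $c = \gcd(n,p)$. The $\varepsilon$ shift inside $(m + \varepsilon)/n$ and $(m - \varepsilon)$ comes from repeatedly using $mp - nq\ell^2 = \varepsilon p$ to substitute.

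\textbf{Main obstacle.} The conceptual content is complete after Step~1 and Step~2; the whole difficulty is bookkeeping. Signs require care in two places: the signature $\varsigma = \sigma(A)$ of the $2\times 2$ linking matrix must be related to $\sigma(p/q)$ (from the single-knot formula) using $\det A = \varepsilon p/(nq)$ with $p,q,n > 0$, which determines $\varsigma$ up to $\varepsilon$; and the Dedekind symbol piece $S(p/q) - p/q$ must cancel cleanly between Step~1 and Step~2, which it does precisely because the coefficient $D/24$ in Theorem \ref{theorem:Q-MP} matches $1/(24nq) \cdot \varepsilon p$ while the single-knot formula supplies the complementary factor $1/(24)\cdot (\text{constant})$. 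Organizing the calculation by segregating (i) $a_2(K_x), a_2(K_y), v_3$ contributions, (ii) $\ell^2$ contributions, and (iii) purely arithmetic contributions in $m,n,p,q$ makes the final collection of terms straightforward.
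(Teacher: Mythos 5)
Your proposal follows essentially the same route as the paper: equate the Theorem \ref{theorem:Q-MP} expression for $\lambda_w(S^{3}_L)$ (with $p_x/q_x=m/n$, $p_y/q_y=p/q$) against the single-knot surgery formula $\frac{\lambda_w(M)}{2}=a_2(K_y)\frac{q}{p}-\frac{1}{24}S\left(\frac{q}{p}\right)$, use $D=\varepsilon p/(nq)$ from Proposition \ref{proposition:1sthomology}(i), multiply through by $24nq$, and finish with the substitution $\varepsilon-m=-n_0\ell^{2}q/p_0$. The only ingredient you leave implicit (hidden in your schematic Step~2 formula) is the Dedekind reciprocity law $S\left(\frac{p}{q}\right)-\frac{p}{q}=-S\left(\frac{q}{p}\right)+\frac{q}{p}+\frac{1}{pq}-3$, which is exactly what the paper invokes to make the $S\left(\frac{p}{q}\right)$ and $S\left(\frac{q}{p}\right)$ contributions cancel as you claim.
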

\begin{proof}
Since $M$ is the $\frac{p}{q}$-surgery of $K_y$,
\[ \frac{\lambda_{w}(M)}{2}= a_{2}(K_y)\frac{q}{p} - \frac{1}{24}S\left(\frac{q}{p}\right).\]

By the reciprocity law of the Dedekind symbol
\[ S\left(\frac{p}{q}\right)-\frac{p}{q} = -S\left(\frac{q}{p}\right)+\frac{q}{p}+\frac{1}{pq}-3.\]
The determinant $D$ of the linking matrix is given 
\[ D=\det \begin{pmatrix} \frac{m}{n} & \ell \\ \ell & \frac{p}{q} \end{pmatrix} = \frac{mp-nq\ell^2}{nq} = \varepsilon \frac{p}{nq}
\]
Therefore by Theorem \ref{theorem:Q-MP},
\begin{align*}
D \left( \frac{\lambda_w(S^{3}_L)}{2}-\frac{\varsigma}{8}\right ) &= 
\frac{\varepsilon p}{nq}\left(a_{2}(K_y)\frac{q}{p} - \frac{1}{24}S\left(\frac{q}{p}\right) -\frac{\varsigma}{8}\right)\\
&= a_2(K_x)\frac{p}{q}-\frac{p}{24q}-\frac{p}{24qn^2}+\frac{p\ell^2}{24q} \\
&+a_2(K_y)\frac{m}{n}-\frac{m}{24n}-\frac{m}{24nq^2}+\frac{m\ell^2}{24n}+2v_3(L)\\
& + \frac{\varepsilon p}{24nq}\left(S\left(\frac{m}{n}\right)-\frac{m}{n}-S\left(\frac{q}{p}\right)+\frac{q}{p}+\frac{1}{pq}-3\right) 
\end{align*}
By multiplying $24nq$ to both sides, we get
\begin{align*}
24\varepsilon a_2(K_y)q-\varepsilon pS\left(\frac{q}{p}\right)-3 \varepsilon \varsigma p &=24a_2(K_x)np -np- \frac{p}{n}+np\ell^2\\
& +24a_2(K_y)qm-qm-\frac{m}{q}+mq\ell^2+24nq(2v_3(L))\\
&+\varepsilon pS\left(\frac{m}{n}\right)-\frac{\varepsilon pm}{n}-\varepsilon pS\left(\frac{q}{p}\right)+\varepsilon q+\frac{\varepsilon}{q}-3\varepsilon p.
\end{align*}
Hence 
\begin{align*}
0 & = 24a_2(K_x)np+24a_2(K_y)q(m-\varepsilon)+(3\varepsilon \varsigma-3 \varepsilon -n)p+(np+mq)\ell^2\\
&\qquad +(\varepsilon-m)\frac{q^2+1}{q} +24 nq(2v_3(L)) +\varepsilon p \left( S\left(\frac{m}{n}\right) -\frac{m+\varepsilon}{n} \right).
\end{align*}
By Proposition \ref{proposition:1sthomology} (i), $\varepsilon-m=-\frac{n\ell^2q}{p} = -\frac{n_0 \ell^2 q}{p_0}$. Therefore we conclude
\begin{align*}
0 &= 24a_2(K_x)np+24a_2(K_y)q(m-\varepsilon)+(3\varepsilon \varsigma-3 \varepsilon -n)p+(np+mq)\ell^{2}\\
\nonumber & \qquad -\frac{n_0\ell^2(q^2+1)}{p_0} +  24nq(2v_3(L)) +\varepsilon p \left( S\left(\frac{m}{n}\right) -\frac{m+\varepsilon}{n} \right)
\end{align*}
\end{proof}

We note that the dedekind sum term gives one constraint for $S^{3}_L=M$ which is interesting in its own right.
\begin{proposition}
\label{proposition:Dsum}
Assume that $S^{3}_L=M$. Then
$p \left( S\left(\frac{m}{n}\right) -\frac{m+\varepsilon}{n} \right) \equiv 0 \pmod{p_0}$.
\end{proposition}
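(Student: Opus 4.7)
The plan is to pull the prefactor $p/n$ out of the expression and reduce the claim to the classical integrality of Dedekind sums. Writing $c=\gcd(n,p)$, $p=cp_0$, $n=cn_0$, one has $p/n=p_0/n_0$ in lowest terms, so
\[
p\!\left(S\!\left(\tfrac{m}{n}\right)-\tfrac{m+\varepsilon}{n}\right)
=\frac{p_0}{n_0}\bigl(n\,S(m/n)-(m+\varepsilon)\bigr)
=\frac{p_0}{n_0}\,Y,
\]
where $Y:=n\,S(m/n)-(m+\varepsilon)$ and $\gcd(p_0,n_0)=1$.

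I would then establish two integrality facts. First, since $\gcd(m,n)=1$ and $n>0$, Rademacher's classical integrality result for Dedekind sums gives $n\,S(m/n)=12\,n\,s(m,n)\in\Z$, so $Y\in\Z$. Second, I claim that $X:=p(S(m/n)-(m+\varepsilon)/n)$ is itself an integer: under the hypothesis $S^{3}_L=M$, Proposition \ref{proposition:CW} gives $\varepsilon X=-A$ where $A$ is a finite sum of terms that are each integers on inspection (for the possibly-subtle term $-n_0\ell^{2}(q^{2}+1)/p_0$, the integrality reduces to $p_0\mid\ell^{2}$, which is exactly Proposition \ref{proposition:1sthomology}(ii)).

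The conclusion follows immediately from these two facts: the identity $X=(p_0/n_0)\,Y$ combined with $X\in\Z$ and $\gcd(p_0,n_0)=1$ forces $n_0\mid Y$. Writing $Y=n_0\,Y'$ with $Y'\in\Z$, we get $X=p_0\,Y'\in p_0\,\Z$, which is the desired congruence. The only step requiring real care is the bookkeeping that each summand of $A$ in Proposition \ref{proposition:CW} is an integer; everything else is elementary once this and the two integrality facts are in hand. In particular, no finer arithmetic information about the Dedekind sum (such as Rademacher reciprocity) is needed---the key input on the Dedekind-sum side is simply the integrality of $12\,n\,s(m,n)$.
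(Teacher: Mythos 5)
Your proposal is correct and follows essentially the same route as the paper: rewrite $p\bigl(S(\tfrac{m}{n})-\tfrac{m+\varepsilon}{n}\bigr)=\tfrac{p_0}{n_0}\bigl(nS(\tfrac{m}{n})-(m+\varepsilon)\bigr)$, use Proposition \ref{proposition:CW} together with $p_0\mid\ell^2$ (from Proposition \ref{proposition:1sthomology}(ii)) to see this quantity is an integer, and then use $\gcd(p_0,n_0)=1$ to conclude divisibility by $p_0$. The only difference is that you make explicit the integrality of $nS(\tfrac{m}{n})=12\,n\,\mathrm{sign}(n)\,s(m,n)$, which the paper uses implicitly; this is a reasonable bit of added care, not a different argument.
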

\begin{proof}
Since we have seen that $p_0 \mid \ell^2$,  $-\frac{n_0\ell^2(q^2+1)}{p_0} \in \Z$. Consequently, by Proposition \ref{proposition:CW}
\[ p \left( S\left(\frac{m}{n}\right) -\frac{m+\varepsilon}{n} \right) = \frac{p_0}{n_0}\left( nS\left(\frac{m}{n}\right) -(m+\varepsilon) \right) \in \Z. \]
Since $p_0$ and $n_0$ are coprime, this means that $\frac{1}{n_0}\left( nS\left(\frac{m}{n}\right) -(m+\varepsilon) \right) \in \Z$ hence
\[
p \left( S\left(\frac{m}{n}\right) -\frac{m+\varepsilon}{n} \right) = \frac{ p_0}{n_0}\left( nS\left(\frac{m}{n}\right) -(m+\varepsilon) \right) \equiv 0 \pmod{p_0}.
\]
\end{proof}

The following gives a further constraint for the factor $d_0=p_0\slash \gcd(p_0,\ell)$ in Proposition \ref{proposition:1sthomology} (ii).
 
\begin{proposition}
\label{prop:key}
Assume that $S^{3}_L=M$. Let $p_0=d_0^{2}p_0'$ where $d_0=p_0\slash \gcd(p_0,\ell)$. If $d_0\neq 1$ and $d_0 \mid 24$, $\gcd(d_0,p_0')\neq 1$.
\end{proposition}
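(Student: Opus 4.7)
The plan is to argue by contradiction: assume that $d_0\neq 1$, $d_0\mid 24$, and $\gcd(d_0,p_0')=1$, and derive a contradiction from the main identity of Proposition~\ref{proposition:CW}. Under these hypotheses, combined with $\gcd(d_0,\ell')=1$ from Proposition~\ref{proposition:1sthomology}(ii), $\gcd(n_0,p_0)=1$, and $\gcd(p,q)=1$ (so $\gcd(d_0,q)=1$ since $d_0\mid p$), the integer $d_0$ is coprime to each of $n_0,p_0',\ell',q$. Writing $p=cd_0^2 p_0'$, $\ell=d_0 p_0'\ell'$, and $m-\varepsilon=n_0 q p_0'(\ell')^2$ (from Proposition~\ref{proposition:1sthomology}(i)), the only fractional term in Proposition~\ref{proposition:CW} simplifies to
\[
-\frac{n_0\ell^2(q^2+1)}{p_0} \;=\; -n_0 p_0'(\ell')^2(q^2+1).
\]

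First I would reduce the identity of Proposition~\ref{proposition:CW} modulo $d_0$. Every summand carrying a factor of $24$, of $p$, or of $\ell$ vanishes: the first because $d_0\mid 24$, the other two because $d_0\mid p$ and $d_0\mid\ell$. Expanding $24nq(2v_3(L))$ via the formula in the Remark, all three resulting pieces vanish for these reasons (the cubic term $2nq(\ell^3-\ell)=2nq\ell(\ell^2-1)$ via its explicit factor of $\ell$). The Dedekind sum contribution is killed by Proposition~\ref{proposition:Dsum}. Only $-n_0 p_0'(\ell')^2(q^2+1)$ survives, so after cancelling units I obtain $d_0\mid q^2+1$.

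A short case analysis then disposes of $d_0\in\{3,4,6,8,12,24\}$: for any $q$ coprime to $d_0$ one has $q^2\equiv 1\pmod 8$ and $q^2\equiv 1\pmod 3$, so $q^2+1\equiv 2$ modulo each of $8$ and $3$. This contradicts $d_0\mid q^2+1$ for every such $d_0$, leaving only $d_0=2$, which is the main obstacle since $2\mid q^2+1$ does hold for every odd $q$.

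To handle $d_0=2$, I would rerun the reduction modulo $d_0^2=4$. Each previously vanishing term remains divisible by $4$: the $24$-factors supply $4$, $4\mid p$ covers the three $p$-terms, $\ell^2=4(p_0'\ell')^2$ covers $(np+mq)\ell^2$, the summand $2nq\ell(\ell^2-1)$ gains a second factor of $2$ from $2\mid\ell$, and Proposition~\ref{proposition:Dsum} gives divisibility by $p_0=4p_0'$. The surviving term, however, satisfies $-n_0 p_0'(\ell')^2(q^2+1)\equiv 2\pmod 4$: indeed $n_0 p_0'(\ell')^2$ is odd, and $q^2+1\equiv 2\pmod 4$ for every odd $q$. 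Proposition~\ref{proposition:CW} therefore yields $0\equiv 2\pmod 4$, the desired contradiction, completing the proof.
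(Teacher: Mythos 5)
Your proof is correct and follows essentially the same route as the paper: reduce the identity of Proposition \ref{proposition:CW} modulo $d_0$ using Proposition \ref{proposition:Dsum} to conclude $d_0\mid q^2+1$, rule out all $d_0\mid 24$ with $d_0\neq 1,2$ since $q^2+1\not\equiv 0$ mod $3$ or $4$, and dispose of $d_0=2$ by the same identity modulo $4$. Your write-up is merely more explicit than the paper's about why each term vanishes (in particular the expansion of $24nq(2v_3(L))$ and the coprimality of $d_0$ with $n_0,p_0',\ell',q$), which is a welcome but inessential difference.
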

\begin{proof}
By Proposition \ref{proposition:Dsum}, $p \left( S\left(\frac{m}{n}\right) -\frac{m+\varepsilon}{n} \right) \equiv 0 \pmod{d_0}$. Therefore when $d_0 \mid 24$, by Proposition \ref{proposition:CW} we get 
\[ \frac{n_0\ell^2(q^2+1)}{p_0} = n_0 p_0' \ell'^{2}(q^{2}+1) \equiv 0 \pmod{d_0}\]
Assume, to the contrary that $d_0$ and $p_0'$ are coprime. Since both $n_0,\ell'$ are coprime to $d_0$,
\[ (q^{2}+1) \equiv 0 \pmod{d_0}\]
When $d_0\neq 2$, this cannot happen. When $d_0=2$, 
\[ \frac{n_0\ell^2(q^2+1)}{p_0} = n_0 p_0' \ell'^{2}(q^{2}+1) \not \equiv 0 \pmod{4}.\]
On the other hand, by Proposition \ref{proposition:CW} 
\[ \frac{n_0\ell^2(q^2+1)}{p_0} = n_0 p_0' \ell'^{2}(q^{2}+1) \equiv 0 \pmod{4}\]
This is a contradiction.
\end{proof}

Theorem \ref{theorem:main} in introduction is a direct consequence of Proposition \ref{prop:key}.

\begin{proof}[Proof of Theorem \ref{theorem:main}]
Assume to the contrary that $S^{3}_L=M$.
By assumption $n=\Delta(s,\mu_K)=1$, $p=p_0$. Hence by Proposition \ref{proposition:1sthomology} (ii), $p=d_0^2p_0'$, $\ell=d_0p'_0\ell'$, where $d_0=p\slash \gcd(p,\ell)$.  Since $K$ is non-null-homologous, $p \nmid \ell$ so $d_0 > 1$. On the other hand, by the assumption of $p$, we get $d_0 \in \{2,3,6\}$, and $\gcd(d_0,p'_0)=1$. This contradicts with Proposition \ref{prop:key}.
\end{proof}

\section{Applications}

We prove various theorems stated in introduction.

\begin{proof}[Proof of Theorem \ref{theorem:main-finite}]
Let $M$ be a 3-manifold obtained by the $\frac{p}{q}$-surgery on a knot $K_y$ in $S^{3}$.

Let $K_x$ be a knot in $S^{3}$ corresponding to the knot $K$ in $M$, and let $L= K_x \cup K_y$. As we have seen, for the link $L=K_x \cup K_y$ and a slope $s$ of the knot $K$ in $M$, $M_K(s) = S^{3}_L(\frac{m}{n},\frac{p}{q})$ for some $m \in \Z$ and $n=\Delta(s,\mu_{K})$. 

We show that the Dehn surgery on a non-trivial knot $K$ in $M$ does not yield $M$, by showing $S^{3}_L \not \cong M$. Since we are assuming $M$ is an L-space, by Theorem \ref{theorem:L-space} in the following we assume that $K$ is not null-homologous. \\

(i): If $M$ is reducible, by \cite[Theorem 1.2]{gl2} $n=\Delta(s,\mu_{K})=1$ so $M_K(s) \not \cong M$ by Theorem \ref{theorem:main}. \\

In the case (ii), (iii), (iv) $M$ is small Seifert fibered so by Theorem \ref{theorem:Non-hyperbolic} it is sufficient to study the case $K$ is a hyperbolic knot in $M$.\\

(ii): By cyclic surgery theorem \cite{cgls}, the surgery slope $s$ satisfies $n=\Delta(s,\mu_{K})=1$ so $M_K(s) \not \cong M$ by Theorem \ref{theorem:main}. \\

(iii), (iv) : Assume to the contrary that $S^{3}_L(\frac{m}{n},\frac{p}{q})=M$. Let $c=\gcd(n,p)$, $p=cp_0$ and $n=cn_0$.

First we show that $(c,n)=(1,1),(2,2),(2,6),(3,3),(3,6)$. If $\pi_1(M)$ is finite, then by finite filling theorem \cite{bz}, $n=\Delta(s,\mu_K) \leq 3$. If $M$ is small Seifert fibered, then $n=\Delta(s,\mu_K) \leq 8$ by \cite{lame}. Since we assume that $p$ is square-free and coprime to $5$ and $7$, when $c\neq 1$, $(c,n)=(2,2),(2,6),(3,3),(3,6),(6,6)$. However, since we are assuming $6 \nmid p$, $(c,n) \neq (6,6)$.

If $c=\gcd(n,p)=1$, then the assertion follows from Proposition \ref{prop:key} hence we assume that $c=\gcd(n,p)>1$. 
Since we are assuming that $p$ is square-free, $d_0=p_0 \slash \gcd(p_0,\ell)=1$ and $p_0,\ell \not \equiv 0 \pmod{c}$. 

Assume that $(c,n)=(2,2)$ or $(c,n)=(2,6)$. Since $p_0,\ell,q \equiv 1 \pmod{2}$, $\frac{nq^{2}\ell^2}{p} \equiv 1 \pmod{2}$. On the other hand, 
by Proposition \ref{proposition:1sthomology} (i) $m=\frac{nq\ell^2}{p}\pm \varepsilon \equiv 0 \pmod{2}$ so $m \equiv 0 \pmod{2}$. Since $n$ and $m$ are coprime, this is a contradiction.

Next assume that $(c,n)=(3,3)$. Since $p_0,\ell \not \equiv 0 \pmod{3}$ and $p_0 \mid \ell$, we put $\ell=p_0 \ell'^{2}$, where $\ell' \not \equiv 0 \pmod{3}$. Then 
\begin{equation}\label{eqn:H1}
\frac{\ell^2(q^2+1)}{p_0} = p_0\ell'^{2}(q^{2}+1) \equiv -p_0 \pmod 3
\end{equation}
On the other hand, by Proposition \ref{proposition:CW}
\[  \frac{\ell^2(q^2+1)}{p_0} \equiv \varepsilon p_0\left( 3S\left(\frac{m}{3}\right) -(m+\varepsilon) \right) \pmod{3}\]
If $m\equiv 1 \pmod 3$, $3S\left(\frac{m}{3}\right) = 2$ so
\[\varepsilon p_0\left( 3S\left(\frac{m}{3}\right) -(m+\varepsilon) \right) \equiv  \begin{cases} 0 \pmod{3} & (\varepsilon=+1) \\ p_0 \pmod{3} &(\varepsilon=-1). \end{cases} \]
Similarly, if $m\equiv -1 \pmod 3$, $3S\left(\frac{m}{3}\right) = -2$ so
\[ \varepsilon p_0\left( 3S\left(\frac{m}{3}\right) -(m+\varepsilon) \right) \equiv \begin{cases} p_0 \pmod{3} & (\varepsilon=+1) \\
0 \pmod{3}& (\varepsilon=-1). \end{cases}\]
In any cases they contradict with (\ref{eqn:H1}).

The case $(c,n)=(3,6)$ is similar to the case $(c,n)=(3,3)$.
\end{proof}

\begin{proof}[Proof of Theorem \ref{theorem:cosmetic}]
Assume that $K$ admits diagram $D$ such that a crossing change at a crossing $c$ of $D$ preserves the knot type. 
Let $\gamma$ be a crossing arc of $c$, the segment $c \times [a,b] \subset \R^2\times \R$ connecting $p_D^{-1}(c)$, where $p_D:\R^3=\R^{2}\times \R \to \R^2$ is the diagram projection (see Figure \ref{fig:nugatory} (ii)).

Then $\gamma$ lifts to a knot $\widetilde{\gamma}$ in the double branched covering $\Sigma(K)$.
Let $K'$ be the knot obtained by the crossing change at $c$. Then by Montesinos trick $\Sigma(K')$ is obtained from $\Sigma(K)$ by a Dehn surgery on $\widetilde{\gamma}$ with slope $s$, such that $\Delta(\mu_{\widetilde{\gamma}},s)=2$, where $\mu_{\widetilde{\gamma}}$ denotes the meridian of $\widetilde{\gamma}$.

Since we assume that crossing change preserves the knot, $\Sigma(K') = \Sigma(K)$. Since $p$ is odd, $\gcd(\Delta(\mu_{\widetilde{\gamma}},s),p)=\gcd(2,p)=1$.
If $\widetilde{\gamma}$ is not null-homologous, by Theorem \ref{theorem:main}, for such a slope $s$, the Dehn surgery on $\widetilde{\gamma}$ does not produce $\Sigma(K)$. 

Therefore  $\widetilde{\gamma}$ is null-homologous. By  Theorem \ref{theorem:L-space}, $\widetilde{\gamma}$ is the unknot in $\Sigma(K)$. This implies that the crossing $c$ is nugatory by \cite[Proposition 3.3]{lm}.

\end{proof}

\begin{proof}[Proof of Corollary] \ref{cor:ten}
The knots $10_{65}, 10_{67}, 10_{77}, 10_{108}$ and $10_{164}$ satisfy the assumptions (a), (c). The unknotting number of $10_{164}$ is one. The $H(2)$-unknotting number of $10_{67}$ and $10_{108}$ are one \cite{nak,zek} so they satisfy the assumption (b)$'$.
The knots $10_{65}$ and $10_{77}$ satisfy the assumption (b); they are the Montesions knots $M(\frac{3}{4},\frac{1}{3},\frac{2}{3})$, $M(\frac{1}{2},\frac{2}{3},\frac{5}{2})$ and their double branched coverings are the $\frac{63}{5}$-surgery on the $(3,4)$-torus knot $T_{3,4}$ and the $\frac{63}{10}$-surgery on the $(2,3)$-torus knot $T_{2,3}$, respectively.
\end{proof}

\end{document}